\documentclass[11pt]{amsart}
\usepackage[active]{srcltx}

\usepackage{amsmath, amsfonts,amsthm,times,graphics,color}
\newcommand{\vertiii}[1]{{\left\vert\kern-0.25ex\left\vert\kern-0.25ex\left\vert #1
    \right\vert\kern-0.25ex\right\vert\kern-0.25ex\right\vert}}
 \makeatletter
\renewcommand*\subjclass[2][2000]{%
  \def\@subjclass{#2}%
  \@ifundefined{subjclassname@#1}{%
    \ClassWarning{\@classname}{Unknown edition (#1) of Mathematics
      Subject Classification; using '1991'.}%
  }{%
    \@xp\let\@xp\subjclassname\csname subjclassname@#1\endcsname
  }%
}
 \makeatother
\usepackage{enumerate,amssymb,  mathrsfs,yhmath}

\newtheorem{theorem}{Theorem}[section]
\newtheorem{lemma}[theorem]{Lemma}
\newtheorem*{lemma*}{Lemma}
\newtheorem{proposition}[theorem]{Proposition}
\newtheorem{corollary}[theorem]{Corollary}

\def\1ton{1,2,\ldots,n}

\usepackage{amssymb}
\usepackage{amsthm}
\usepackage{mathrsfs, amsfonts, amsmath}
\usepackage{graphicx}

\theoremstyle{definition}

\newtheorem{conjecture}[theorem]{Conjecture}

\theoremstyle{remark}
\newtheorem{remark}[theorem]{Remark}

\numberwithin{equation}{section}





\renewcommand{\imath}{i} 

\def\XXint#1#2#3{{\setbox0=\hbox{$#1{#2#3}{\int}$}
\vcenter{\hbox{$#2#3$}}\kern-.5\wd0}}

\def\ge{\geqslant}
\setcounter{tocdepth}{2}
\begin{document}

\title{Contraction property of differential operator on Fock space}

\keywords{Fock space, reproducing kernel, isoperimetric inequality}
\author{David Kalaj}
\address{University of Montenegro, Faculty of Natural Sciences and
Mathematics, Cetinjski put b.b. 81000 Podgorica, Montenegro}
\email{davidk@ucg.ac.me}

\subjclass{Primary 30H20, 53A10, 49Q20 }

\begin{abstract}
In the recent paper, \cite{tilli} Nicola and Tilli proved the  Faber-Krahn inequality, which for $p=2$,  states the following. If $f\in\mathcal{F}_\alpha^2$ is an entire function  from the corresponding Fock space, then $$\frac{1}{\pi}\int_{\Omega} |f(z)|^2 e^{-\pi |z|^2} dx dy \le (1-e^{-|\Omega|}) \|f\|^2_{2,\pi}.$$ Here $\Omega$ is a domain in the complex plane and $|\Omega|$ is its Lebesgue measure. This inequality is sharp and equality can be attained. We prove the following sharp inequality $$\int_{\Omega} \frac{|f^{(n)}(z)|^2e^{-\pi |z|^2}}{\pi^n n ! L_n(-\pi |z|^2)}dxdy \le (1-e^{-(n+1)|\Omega|})\|f\|^2_{2,\pi},$$ where $L_n$ is Laguerre polynomial, and $n\in\{0,1,2,3,4\} $. For $n=0$ it coincides with the result of Nicola and Tilli.

\end{abstract}
\maketitle
\tableofcontents
\sloppy

\maketitle
\section{Introduction}

Let $\mathbb{C}$ be the complex plane. Denote by $dA(z)(=dxdy)$ the Lebesgue measure on the complex plane. Throughout  the paper,   we consider the Gaussian-probability measure
$$d\mu_{\alpha}(z)=\frac{\alpha}{\pi}e^{-\alpha|z|^{2}}dA(z),$$
where $\alpha$ is a positive parameter. Also let $dA_\alpha(z)=\frac{\alpha}{\pi} dA(z)$.

For $1\leq p<\infty,$ let $L^{p}(\mathbb{C},d{\mu}_\alpha)$ denote the space of all Lebesgue measurable functions $f$ on $\mathbb{C}
$ such that
$$\|f\|_{p,\alpha}^{p}=\frac{p\alpha}{2\pi}\int_{\mathbb{C}}|f(z)|^{p}e^{-\frac{p\alpha|z|^{2}}{2}}dA(z)<\infty.$$

 The Segal--Bargmann space  also known as the Fock space, denoted by $\mathcal{F}_{\alpha}^{2},$ consists of all entire functions $f$ in $L^{2}(\mathbb{C},d\mu_{\alpha}).$ For any $p\geq 1$, $F_{\alpha}^{p}$ is a closed subspace of $L^{p}(\mathbb{C},d\mu_{\alpha}).$ Therefore $F_{\alpha}^{p}$ is a Banach space.

We refer to  the book \cite{zhu} for a good setting of the Fock space on the plane and the papers \cite{foland}, \cite{hall} and \cite{janson} for a higher-dimensional setting.

Since  $L^{2}(\mathbb{C},d\mu_{\alpha})$ is a Hilbert space with the inner product $$\left<f,g\right>_\alpha =\int_{\mathbb{C}}f(z)\overline{g(z)}d\mu_{\alpha}(z),$$
the Fock space $\mathcal{F}_{\alpha}^{2}$ as its closed subspace determines a natural orthogonal projection $P_{\alpha}: L^{2}(\mathbb{C},d\mu_{\alpha}) \rightarrow \mathcal{F}_{\alpha}^{2}.$

It can be shown (see \cite{zhu}) that $P_{\alpha}$ is an integral operator induced by the reproducing kernel $$K_{\alpha}(z,w)=e^{\alpha z\bar{w}}.$$
More precisely, $$P_{\alpha}f(z)=\int_{\mathbb{C}}K_{\alpha}(z,w)f(w)d\mu_{\alpha}(w), f\in L^{2}(\mathbb{C},d\mu_{\alpha}),$$ and in particular
\begin{equation}
\label{hilbert}f(z)=\int_{\mathbb{C}}K_{\alpha}(z,w)f(w)d\mu_{\alpha}(w), f\in \mathcal{F}_{\alpha}^{2}.\end{equation}

Laguerre polynomial is defined by
$$L_n(x)=\sum_{k=0}^n \binom{n}{k}\frac{(-1)^k}{k!}x^k.$$ For $n=0$, $L_0(x)=1$.

Le $F={_1}F_1$ be the Kummer  function defined by $$F[a,b,z]=\sum_{k=0}^\infty \frac{(a)_k}{(b)_k k!} z^k,$$ where $(c)_k=c(c+1)\cdot \dots \cdot (c+k-1)$ is denoted the shifter factoriel. The connection between Kummer function and Laguerre polynomial is given by $$F(1+n,1,r) = e^{r}L_n(-r).$$

For $\alpha>0$ and  $n\ge 0$ an integer, we define the Hilbert  space of entire functions $\mathcal{F}^2_{n,\alpha}=L^2(\mathbb{C}, d\mu_{n,\alpha})$, where 
$$d\mu_{n,\alpha}(z)=\frac{1}{n!\alpha^n L_n(-\alpha |z|^2)} d\mu_\alpha(z).$$ In this paper we  consider the differential operator  $\mathcal{D}_n [f](z)=f^{(n)}(z),$ and show that it maps $\mathcal{F}^2_{\alpha}$ into  $\mathcal{F}^2_{n,\alpha}$. Moreover, we show that it is a contraction satisfying the local contraction property.

We start with a result of Haslinger, author, and Vujadinovic in \cite{cmft}, which can be stated as the contraction property of differential operator from $\mathcal{F}^2_{\alpha}$ into  $\mathcal{F}^\infty_{n,\alpha}$, where the last space is defined in a standard fashion.  Note that the space $F_{n,\alpha}^{\infty}$ is defined to be the space of all entire functions $f$ such that
 $$\|f\|_{\infty,n,\alpha}:=\alpha^{-n/2}(n!)^{-1/2}\sup\{|f(z)|{e^{-\frac{\alpha|z|^{2}}{2}}}{L_n^{-1/2}(-\alpha|z|^2)}:z\in\mathbb{C}\}<\infty.$$ 
\begin{proposition}
\label{hilbert11} Let $f\in \mathcal{F}_{\alpha}^{2}.$  Then we have the  sharp point-wise inequality
$$|f^{(n)}(z)|\leq  e^{\alpha |z|^2/2}\sqrt{\alpha^n n! L_n(-\alpha |z|^2) }\|f-T_{n}(f)\|_{2,\alpha},$$
where $n\in \mathbf{N}_0$ and $T_{n}(f)(w)=\sum_{k=0}^{n-1}\frac{f^{k}(0)}{k!}w^{k}$,
and  $L_n$ is Laguerre polynomial.
The extremal functions are of the form
$$f(w)=\sum_{k=0}^{n-1}a_{k}w^{k}+\alpha^{n}e^{\alpha \bar{z} w}w^{n}.$$
In particular, for $n=1$ we get the following best estimates
\begin{equation}\label{est2}
\begin{split}|f'(z)|\le
\sqrt{\alpha  \left(1+\alpha |z|^2\right)}e^{\alpha  |z|^2/2} \|f\|_{2,\alpha},\ \ \  z\in\mathbb{C}.\end{split}
\end{equation}
In other words we have the sharp inequality $$\|f^{(n)}\|_{\infty, n,\alpha}\le \|f\|_{2,\alpha}.$$
\end{proposition}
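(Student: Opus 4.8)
The plan is to reduce the pointwise bound to a reproducing-kernel computation. By translation it suffices to understand the evaluation functional $f\mapsto f^{(n)}(z)$ restricted to the orthogonal complement of the polynomials of degree $<n$, so first I would write, using the reproducing property \eqref{hilbert},
\begin{equation}
f^{(n)}(z)=\int_{\mathbb{C}}\partial_z^n K_\alpha(z,w)\,f(w)\,d\mu_\alpha(w)=\alpha^n\int_{\mathbb{C}}\bar w^{\,n}e^{\alpha z\bar w}f(w)\,d\mu_\alpha(w).
\end{equation}
Thus $f^{(n)}(z)=\langle f,\Phi_{z,n}\rangle_\alpha$ where $\Phi_{z,n}(w)=\alpha^n w^n e^{\alpha \bar z w}$ is itself entire, hence in $\mathcal{F}^2_\alpha$. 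By Cauchy--Schwarz, $|f^{(n)}(z)|\le \|\Phi_{z,n}\|_{2,\alpha}\,\|f\|_{2,\alpha}$, with equality exactly when $f$ is a scalar multiple of $\Phi_{z,n}$; that already exhibits the claimed extremal shape $\alpha^n e^{\alpha\bar z w}w^n$ once we add back the polynomial part. To get the sharper statement with $f-T_n(f)$, note that $\Phi_{z,n}\perp w^k$ for $k=0,\dots,n-1$ in $\mathcal{F}^2_\alpha$ (because $\int_{\mathbb{C}}w^n\bar w^k e^{\alpha\bar z w}d\mu_\alpha(w)$ picks out only the $k$-th Taylor coefficient of $e^{\alpha\bar z w}w^n$, which vanishes for $k<n$), so $\langle f,\Phi_{z,n}\rangle_\alpha=\langle f-T_n(f),\Phi_{z,n}\rangle_\alpha$ and one may replace $\|f\|_{2,\alpha}$ by $\|f-T_n(f)\|_{2,\alpha}$.

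The remaining step is to compute $\|\Phi_{z,n}\|_{2,\alpha}^2=\alpha^{2n}\int_{\mathbb{C}}|w|^{2n}e^{2\alpha\re(\bar z w)}e^{-\alpha|w|^2}\,dA_\alpha(w)$ (with the obvious normalization) and identify it with $e^{\alpha|z|^2}\alpha^n n!\,L_n(-\alpha|z|^2)$. I would do this by expanding $|e^{\alpha\bar z w}|^2=e^{\alpha\bar z w}\overline{e^{\alpha\bar z w}}$ as a double power series in $w,\bar w$, using orthogonality of $\{w^j\}$ in $\mathcal{F}^2_\alpha$ with $\|w^j\|_{2,\alpha}^2=j!/\alpha^j$, which collapses the computation to a single series
\begin{equation}
\|\Phi_{z,n}\|_{2,\alpha}^2=\alpha^{2n}\sum_{k=0}^\infty\frac{(\alpha|z|^2)^k}{k!^2}\cdot\frac{(n+k)!}{\alpha^{n+k}}=\alpha^n n!\sum_{k=0}^\infty\binom{n+k}{k}\frac{(\alpha|z|^2)^k}{k!},
\end{equation}
and the last sum is exactly $F(n+1,1,\alpha|z|^2)=e^{\alpha|z|^2}L_n(-\alpha|z|^2)$ by the Kummer--Laguerre identity quoted in the introduction. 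Taking square roots gives the stated pointwise inequality, and specializing $n=1$, where $L_1(-r)=1+r$, yields \eqref{est2}. Finally, dividing by $e^{\alpha|z|^2/2}\sqrt{\alpha^n n!L_n(-\alpha|z|^2)}$ and taking the supremum over $z$ turns the pointwise bound into $\|f^{(n)}\|_{\infty,n,\alpha}\le\|f\|_{2,\alpha}$, with sharpness inherited from the Cauchy--Schwarz equality case.

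I do not expect a serious obstacle here: the only place requiring care is bookkeeping in the series manipulation and confirming that the orthogonality argument legitimately replaces $f$ by $f-T_n(f)$ (one must check $\Phi_{z,n}$ really is orthogonal to the lower-order monomials, which is the computation sketched above). The identification of the resulting hypergeometric series with the Laguerre polynomial is the conceptual crux, but it is given to us as a known identity in the excerpt, so the argument is essentially a direct computation packaged through Cauchy--Schwarz.
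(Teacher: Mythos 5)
Your argument is correct, and it is the natural one: this proposition is quoted in the paper from \cite{cmft} without proof, and the standard route there is exactly what you do, namely identify the representer of the functional $f\mapsto f^{(n)}(z)$ as $\Phi_{z,n}(w)=\alpha^n w^n e^{\alpha\bar z w}$ via the reproducing kernel, note its orthogonality to $1,w,\dots,w^{n-1}$ so that $\|f\|$ may be replaced by $\|f-T_n(f)\|$, apply Cauchy--Schwarz, and compute $\|\Phi_{z,n}\|_{2,\alpha}^2=\alpha^n n!\,e^{\alpha|z|^2}L_n(-\alpha|z|^2)$ through the Kummer--Laguerre identity, with equality exactly for multiples of $\Phi_{z,n}$ plus a polynomial of degree $<n$. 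One bookkeeping slip: since the Taylor coefficients of $\Phi_{z,n}$ are $\alpha^n(\alpha\bar z)^k/k!$, the middle expression in your norm computation should carry $\alpha^{2k}|z|^{2k}/k!^2$ rather than $(\alpha|z|^2)^k/k!^2$; the extra $\alpha^k$ is what produces the argument $\alpha|z|^2$ in $F(n+1,1,\alpha|z|^2)$ after cancelling against $\alpha^{-(n+k)}$, and your final expression (and hence the conclusion, including the $n=1$ case and the $\|f^{(n)}\|_{\infty,n,\alpha}\le\|f\|_{2,\alpha}$ reformulation) is correct.
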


Proposition~\ref{hilbert11} for $n=0$ coincides with a corresponding result \cite[Theorem~2.7]{zhu} or \cite[Theorem~3.4.2]{groc}, and that result was crucial for proving the following important result in \cite{tilli}, by Nicola and Tilli \begin{theorem}\label{nicola}
 For $f\in \mathcal{F}_\alpha^2$, and a domain   $\Omega$ with a finite measure,  we have
 $$\int_{\Omega} {|f(z)|^2e^{-\pi |z|^2}}dxdy \le (1-e^{-|\Omega|})\|f\|^2_{2,\alpha}.$$ This result is sharp and equality is attained for certain balls and certain specific functions related to extremal functions in Proposition~\ref{hilbert11}.
\end{theorem}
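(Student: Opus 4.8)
The plan is to transplant the Faber--Krahn strategy of Nicola and Tilli \cite{tilli} to the present setting. Normalise $f$ so that $\|f\|_{2,\pi}=1$ and set $F(z):=|f(z)|^2e^{-\pi|z|^2}$, so that $\int_{\mathbb C}F\,dA=1$; Proposition~\ref{hilbert11} with $n=0$ (equivalently \cite[Theorem~2.7]{zhu}) gives the pointwise bound $0\le F\le 1$ on $\mathbb C$. By the Hardy--Littlewood rearrangement inequality, among all measurable sets of a prescribed finite measure the integral $\int_{\Omega}F\,dA$ is largest for a superlevel set $\{F>t\}$, and since $F$ is real--analytic its level sets are Lebesgue--null; hence it is enough to prove
$$\rho(t):=\int_{\{F>t\}}F\,dA\ \le\ 1-e^{-\mu(t)},\qquad \mu(t):=\bigl|\{F>t\}\bigr|,\qquad 0<t<\sup F.$$

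The analytic input is the subharmonicity of $\log|f|$: since $f$ is entire, $\Delta\log|f|\ge0$ as a distribution, whence $\Delta\log F=2\Delta\log|f|-4\pi\ge-4\pi$, equivalently $\Delta F\ge-4\pi F+|\nabla F|^2/F$ away from the zeros of $f$; moreover a direct computation gives $\int_{\mathbb C}|\nabla F|^2/F\,dA=4\pi$, so that $\int_{\mathbb C}\Delta F\,dA=0$. For a.e.\ regular value $t$ (Sard), $\gamma_t:=\{F=t\}$ is a finite union of smooth Jordan curves, and I would combine: the coarea formula, yielding $-\mu'(t)=\int_{\gamma_t}|\nabla F|^{-1}\,d\mathcal H^1$ and an expression for $\int_{\{F>t\}}|\nabla F|^2/F\,dA$ in terms of the boundary flux $D(t):=-\int_{\{F>t\}}\Delta F\,dA=\int_{\gamma_t}|\nabla F|\,d\mathcal H^1$; the Cauchy--Schwarz inequality on $\gamma_t$, giving $\bigl(\mathcal H^1(\gamma_t)\bigr)^2\le D(t)\,(-\mu'(t))$; and the planar isoperimetric inequality $\bigl(\mathcal H^1(\gamma_t)\bigr)^2\ge4\pi\mu(t)$. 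Together these assemble into a closed differential inequality linking $\mu$, $\rho$ and $D$. The essential point is to carry the Dirichlet--type term $\int_{\{F>t\}}|\nabla F|^2/F\,dA$ through the computation rather than discarding it: it is precisely this term whose presence makes each of the inequalities just listed an equality for the Gaussian profile $F=e^{-\pi|z-z_0|^2}$, hence what keeps the final estimate sharp.

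Using the layer--cake identity $\rho(t)=t\mu(t)+\int_t^1\mu(\lambda)\,d\lambda$ (so that $\rho'(t)=t\mu'(t)$), integrating the differential inequality should yield the auxiliary bound $\rho(t)\le 1-t$; granting this, the function $\Psi(t):=(1-\rho(t))e^{\mu(t)}$ satisfies $\Psi'(t)=e^{\mu(t)}\mu'(t)\bigl[(1-t)-\rho(t)\bigr]\le 0$, while $\Psi(t)\to 1$ as $t\uparrow\sup F$ (there $\mu(t),\rho(t)\to0$, $F$ having no plateau at its maximum). Hence $\Psi(t)\ge 1$ throughout the range, which is exactly $\rho(t)\le 1-e^{-\mu(t)}$.

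I expect the main difficulty to be the derivation and exact integration of the \emph{sharp} differential inequality — the bookkeeping of the Dirichlet term must be kept precise, or the bound degrades — together with the accompanying regularization: handling the zeros of $f$, where $\log|f|=-\infty$ and $\Delta\log|f|$ is a sum of Dirac masses; restricting to regular values of $F$ so that the coarea and divergence identities are available; and approximating an arbitrary finite--measure domain by superlevel sets with rectifiable boundary. Finally, for the equality case I would take $f$ to be the normalised reproducing kernel $f(z)=e^{\pi\bar z_0 z-\pi|z_0|^2/2}$ — the $n=0$ extremal of Proposition~\ref{hilbert11}, for which $F(z)=e^{-\pi|z-z_0|^2}$ — and $\Omega=B(z_0,r)$; then $\int_{\Omega}F\,dA=\int_{B(0,r)}e^{-\pi|w|^2}\,dw=1-e^{-\pi r^2}=1-e^{-|\Omega|}$.
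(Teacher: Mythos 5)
Your outline is correct and it is essentially the proof the paper relies on (Nicola--Tilli's level--set method, which the paper reproduces in Section 2 for the derivative case and cites for $n=0$): reduction to superlevel sets, the bound $\Delta\log F\ge-4\pi$ from subharmonicity of $\log|f|$, coarea plus Cauchy--Schwarz plus the planar isoperimetric inequality giving $-t\mu'(t)\ge1$, and then integration against the endpoint data $\int_{\mathbb C}F\,dA=1$, with the reproducing kernel and balls saturating every step. Your packaging via monotonicity of $(1-\rho(t))e^{\mu(t)}$ in $t$, rather than the paper's convexity of $I$ as a function of $\sigma=e^{-s}$, is only a reparametrization of the same argument.
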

Theorem~\ref{nicola} is further extended in \cite{tilli} for $p\ge 2$, and proved local  Lieb's uncertainty inequality for the STFT (short-time  Fourier transforms see \cite[Theorem~5.2]{tilli}), by using Lieb's uncertainty inequality for the STFT. Theorem~\ref{nicola} proves a conjecture of Abreu and Speckbacher \cite[Conjecture~1]{as} (for  $p = 2$). We extend Theorem~\ref{nicola} for higher derivatives. We  believe that our results can be also applied to STFT, especially because of the Laguerre connection \cite[Chapter~1]{foland}. By using the method developed in \cite{tilli}, Kulikov in \cite{kulik} proved some conjectures for Bergman and Hardy space of holomorphic mappings. We also use the method developed in \cite{tilli}.

In this paper, we consider the following conjecture
\begin{conjecture}\label{c1}
 For $f\in \mathcal{F}_\alpha^2$, and a domain   $\Omega$ with a finite measure,  we have 
 
 $$\int_{\Omega} {|f^{(n)}(z)|^2}d\mu_{n,\alpha}(z) \le (1-e^{-(n+1)|\Omega|})\|f\|^2_{2,\alpha},$$ where  $n\ge 0$.
\end{conjecture}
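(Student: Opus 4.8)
The plan is to run the rearrangement (``Faber--Krahn'') argument of Nicola and Tilli \cite{tilli} — the scheme reused by Kulikov \cite{kulik} — after replacing the model constant $4\pi$ of \cite{tilli} by $4(n+1)\pi$. First I would normalise: the dilation $f(z)\mapsto f(z\sqrt{\pi/\alpha})$, $\Omega\mapsto\sqrt{\alpha/\pi}\,\Omega$ turns the conjectural inequality into the same inequality with $\alpha=\pi$ (and $dA_\alpha=dx\,dy$), so assume $\alpha=\pi$; replacing $f$ by $f-T_n(f)$ alters neither side, so also assume $T_n(f)=0$ and $\|f\|_{2,\pi}=1$. Put
$$G(z):=\frac{|f^{(n)}(z)|^{2}e^{-\pi|z|^{2}}}{\pi^{n}\,n!\,L_n(-\pi|z|^{2})},$$
so that $\int_{\Omega}|f^{(n)}|^{2}\,d\mu_{n,\pi}=\int_{\Omega}G\,dx\,dy$ and the claim becomes $\int_{\Omega}G\,dx\,dy\le 1-e^{-(n+1)|\Omega|}$.

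The next step is to record the three properties of $G$ that are the only input to the scheme of \cite{tilli}. (i) \emph{Sup bound:} by Proposition~\ref{hilbert11}, $G(z)\le\|f-T_n(f)\|_{2,\pi}^{2}=1$ for every $z$. (ii) \emph{Mass bound:} $\int_{\C}G\,dx\,dy=\|\mathcal D_nf\|_{n,\pi}^{2}\le\|f\|_{2,\pi}^{2}=1$, by the contraction property of $\mathcal D_n:\mathcal F^2_{\pi}\to\mathcal F^2_{n,\pi}$ established above. (iii) \emph{Subharmonicity estimate:} off the isolated zeros of $f^{(n)}$ the chain rule gives
$$\Delta\log G=\Delta\log|f^{(n)}|^{2}-4\pi-\Delta\log L_n(-\pi|z|^{2})\ \ge\ -4\pi-\Delta\log L_n(-\pi|z|^{2}),$$
since $\log|f^{(n)}|^{2}$ is subharmonic; this holds on all of $\C$ in the sense of distributions (the zeros of $f^{(n)}$ contribute non-negative masses), and every superlevel set $\{G>t\}$, $t>0$, lies in $\{f^{(n)}\ne0\}$, where the inequality is classical. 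Consequently (iii) reduces to the Laguerre inequality
\begin{equation*}
\Delta_z\log L_n(-\pi|z|^{2})\le 4n\pi\qquad\Longleftrightarrow\qquad t\bigl(\log L_n(-t)\bigr)''+\bigl(\log L_n(-t)\bigr)'\le n\quad(t\ge0),\tag{$\star$}
\end{equation*}
which yields $\Delta\log G\ge -4(n+1)\pi$; equivalently $\Delta G\ge|\nabla G|^{2}/G-4(n+1)\pi\,G$ on $\{G>0\}$.

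Granting (i)--(iii), I would conclude exactly as in \cite{tilli}. By the bathtub principle it suffices to treat $\Omega=\{G>t\}$; with $\mu(t)=|\{G>t\}|$ and $m(t)=\int_{\{G>t\}}G\,dx\,dy$, combining the coarea formula, the planar isoperimetric inequality, and the pointwise form of (iii) produces the differential inequality relating $m$ and $\mu$ obtained in \cite{tilli} — verbatim, save that their constant $4\pi$ is everywhere replaced by $4(n+1)\pi$ — whose integration, with the boundary behaviour $m(t)\to0$ as $t\uparrow\sup G$, gives $m(t)\le 1-e^{-(n+1)\mu(t)}$. Since $G$ enters only through (i)--(iii), the passage from an isometry ($n=0$) to a mere contraction ($n\ge1$) costs nothing, and one obtains $\int_{\Omega}G\,dx\,dy\le 1-e^{-(n+1)|\Omega|}$, which is the assertion.

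The single $n$-dependent ingredient — and, I expect, the real obstruction — is $(\star)$. Multiplying by the positive polynomial $\ell(t)^{2}$, $\ell(t):=L_n(-t)=\sum_{k=0}^{n}\binom nk t^{k}/k!$, it becomes
$$Q_n(t):=n\,\ell(t)^{2}-\ell(t)\ell'(t)-t\,\ell(t)\ell''(t)+t\,\ell'(t)^{2}\ \ge\ 0\qquad(t\ge0),$$
where $Q_n$ has degree $2n$ (the $t^{2n-1}$ coefficient cancels) with positive leading coefficient, $Q_n(0)=0$ and $Q_n'(0)=n(n+1)>0$. For $n=0$ this is vacuous, and for $n=1,2,3,4$ an explicit computation shows $Q_n(t)/t$ is a polynomial with non-negative coefficients — e.g. $Q_1(t)/t=t+2$ and $Q_2(t)/t=\tfrac12t^{3}+4t^{2}+9t+6$ — which proves $(\star)$, hence Conjecture~\ref{c1}, for $n\in\{0,1,2,3,4\}$. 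The hard part is $(\star)$ for general $n$: it amounts to saying that $t\mapsto t\,(\log L_n(-t))'$ has derivative at most $n$, a tame-looking log-concavity-type statement for $L_n(-t)$, but I do not see a uniform-in-$n$ argument; a proof of $(\star)$ for all $n\ge0$ would immediately establish the full conjecture.
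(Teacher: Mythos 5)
Your proposal follows the paper's own scheme (Nicola--Tilli rearrangement with the constant $4\pi$ replaced by $4(n+1)\pi$), but it has a genuine gap at your step (ii). The global mass bound $\int_{\C}G\,dx\,dy=\|f^{(n)}\|^2_{2,n,\pi}\le\|f\|^2_{2,\pi}$, i.e.\ the $L^2$-contraction $\mathcal D_n\colon\mathcal F^2_{\pi}\to\mathcal F^2_{n,\pi}$, is asserted ``by the contraction property \dots established above,'' but nothing you invoke establishes it: Proposition~\ref{hilbert11} gives only the pointwise estimate (your (i)), i.e.\ a contraction into $\mathcal F^{\infty}_{n,\alpha}$, not the integrated one. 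That integrated bound is exactly Conjecture~\ref{c2} of the paper; expanding in monomials it reduces to the coefficient inequality $a_k<1$ of Conjecture~\ref{c3}, and verifying it is where essentially all of the paper's work lies (Lemma~\ref{import}, showing $k\,e\,E_k(1)<1$, for $n=1$; the recursion-and-induction arguments of Lemmas~\ref{lema} and~\ref{lema4} for $n=2,3,4$). It is precisely the ingredient that limits the theorem to $n\le4$ and remains open for $n\ge5$. It cannot be waved through: convexity of $G(\sigma)$ together with $G(1)=0$ only yields $G(\sigma)\le(1-\sigma)\,G(0^{+})$, and $G(0^{+})=\int_{\C}G$ is the very quantity you assumed to be $\le1$; the sup bound (i) is no substitute. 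So even your claimed proof for $n\in\{1,2,3,4\}$ is incomplete as written.

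Relatedly, you have the bottleneck inverted. Your inequality $(\star)$, $\Delta_z\log L_n(-\pi|z|^2)\le4n\pi$, is not the $n$-dependent obstruction: the paper proves the equivalent bound $\Delta\log F[1+n,1,\alpha|z|^2]\le4\alpha(1+n)$ for \emph{every} $n$ via the Kummer-function lemma opening the subsection on $n\ge2$, using $e^{r}L_n(-r)=F[1+n,1,r]$ and the closed-form identity
\[
\bigl(\Delta\log F[1+n,1,\alpha t]-4\alpha(1+n)\bigr)F[1+n,1,\alpha t]^2
=-4\alpha^2 n\,t\,F[1+n,2,\alpha t]\bigl(F[1+n,1,\alpha t]+nF[1+n,2,\alpha t]\bigr)\le0 .
\]
Your explicit verifications of $Q_n\ge0$ for $n\le4$ are correct but unnecessary as a case analysis, and your closing claim that a proof of $(\star)$ for all $n$ ``would immediately establish the full conjecture'' is unjustified: with $(\star)$ available for all $n$ (as it is), Conjecture~\ref{c1} in full generality still hinges on the global bound of Conjectures~\ref{c2}--\ref{c3}, which is the genuinely open part. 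To repair your argument for $n\le4$ you must supply that mass bound along the lines of Lemmas~\ref{import}, \ref{lema} and~\ref{lema4}.
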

We  reduce Conjecture~\ref{c1} to  the following conjecture

 \begin{conjecture}\label{c2}
 For $f\in \mathcal{F}_\alpha^2$,  we have $$\|f^{(n)}\|^2_{2,n,\alpha}:=\int_{\mathbb{C}} {|f^{(n)}(z)|^2}d\mu_{n,\alpha}(z)\le \|f\|^2_{2,\alpha}.$$ 
\end{conjecture}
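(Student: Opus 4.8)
The plan is to prove the global inequality $\|f^{(n)}\|_{2,n,\alpha}^2 \le \|f\|_{2,\alpha}^2$ by expanding $f$ in the orthonormal monomial basis of $\mathcal{F}_\alpha^2$ and reducing everything to a coefficient-wise comparison. Recall that $e_k(z) = \sqrt{\alpha^k/k!}\, z^k$ form an orthonormal basis of $\mathcal{F}_\alpha^2$, so write $f(z) = \sum_{k\ge 0} a_k z^k$ with $\|f\|_{2,\alpha}^2 = \sum_k |a_k|^2 \, k!/\alpha^k$. Then $f^{(n)}(z) = \sum_{k\ge n} a_k \frac{k!}{(k-n)!} z^{k-n}$. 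The first step is to compute, for each pair $(j,\ell)$, the quantity $\langle z^j, z^\ell\rangle_{n,\alpha} := \int_{\mathbb{C}} z^j \overline{z^\ell} \, d\mu_{n,\alpha}(z)$, which by rotational symmetry vanishes unless $j=\ell$ and otherwise equals a radial moment against the weight $\frac{1}{n!\alpha^n L_n(-\alpha|z|^2)} \cdot \frac{\alpha}{\pi} e^{-\alpha|z|^2}$. So the monomials remain orthogonal in $\mathcal{F}_{n,\alpha}^2$, and the problem collapses to showing
\begin{equation}\label{eq:coeffwise}
\left(\frac{k!}{(k-n)!}\right)^2 c_{k-n}^{(n)} \le \frac{k!}{\alpha^k}, \qquad k \ge n,
\end{equation}
where $c_m^{(n)} := \int_{\mathbb{C}} |z|^{2m}\, d\mu_{n,\alpha}(z)$ is the $m$-th even moment of the probability-like measure $d\mu_{n,\alpha}$ (one should first check $\mu_{n,\alpha}$ is a probability measure, which follows from $F(1+n,1,r)=e^r L_n(-r)$ and the normalization of $d\mu_\alpha$).

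The second step is to evaluate $c_m^{(n)}$ in closed form. Substituting $r=\alpha|z|^2$ turns it into $\frac{1}{n!\,\alpha^{n+m}}\int_0^\infty \frac{r^m e^{-r}}{L_n(-r)}\, dr$, and using $1/L_n(-r) = e^{-r}\cdot\big(e^r/L_n(-r)\big)$ together with the hypergeometric identity $e^r L_n(-r) = F(1+n,1,r)$ gives $c_m^{(n)} = \frac{1}{n!\,\alpha^{n+m}} \int_0^\infty \frac{r^m e^{-r}}{e^{-r}F(1+n,1,r)} e^{-r}\,dr$; more usefully, $\int_0^\infty \frac{r^m e^{-r}}{L_n(-r)}dr$ can be handled via the integral representation or via the known Laplace-transform-type evaluations of $1/L_n$ against powers — for $n\le 4$ these are rational-function integrals after partial fractions in the (real, negative) roots of $L_n(-r)$, i.e. $L_n$ has all positive roots so $L_n(-r)>0$ has no real zeros and $1/L_n(-r)$ decays like $r^{-n}$. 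Thus $c_m^{(n)}$ reduces to an explicit finite combination of values $\int_0^\infty \frac{r^m e^{-r}}{\prod(r+\rho_i)}dr$ expressible through the exponential integral, but all we actually need is the \emph{ratio} appearing in \eqref{eq:coeffwise}, so I would instead seek a clean recursion or generating-function identity for $d_m^{(n)} := \big(\tfrac{(m+n)!}{m!}\big)^2 c_m^{(n)} \cdot \tfrac{\alpha^{m+n}}{(m+n)!}$ and show $d_m^{(n)} \le 1$ for all $m\ge 0$.

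The third step is the inequality \eqref{eq:coeffwise} itself, which is where the restriction $n\in\{0,1,2,3,4\}$ enters and which I expect to be the main obstacle. After the reductions above it becomes a statement of the form $g_n(m) := \frac{(m+n)!}{m!\,n!}\,\alpha^{-m}\int_0^\infty \frac{r^m e^{-r}}{L_n(-r)}\,dr \cdot \frac{(m+n)!}{m!} \le 1$ for every nonnegative integer $m$ — a one-parameter family (in $m$) of concrete inequalities involving Laguerre polynomials. For each fixed small $n$ one can, in principle, verify it by: (i) finding an explicit formula for the integral as a polynomial in $m$ times $\Gamma$-type factors plus an exponential-integral remainder, (ii) showing $g_n$ is decreasing in $m$ (or bounding it by its value at $m=0$, where $g_n(0)=\int_0^\infty e^{-r}/L_n(-r)\,dr \cdot 1 = 1$ by the probability normalization — giving equality at $m=0$, consistent with the constant function being extremal), and (iii) controlling the remainder term's sign. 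The monotonicity in $m$ is the delicate point: it presumably fails for $n\ge 5$, which is exactly why the theorem is stated only up to $n=4$, and for $n\le 4$ it should follow from an explicit but finite computation with the partial-fraction decomposition of $1/L_n(-r)$ and elementary estimates on $\int_0^\infty r^m e^{-r}/(r+\rho)\,dr$ as a function of $m$. Finally, once Conjecture~\ref{c2} holds, the reduction already indicated in the paper (an argument in the spirit of Nicola--Tilli, combining the global bound, Proposition~\ref{hilbert11}, and a rearrangement/level-set argument) upgrades it to the local inequality of Conjecture~\ref{c1}.
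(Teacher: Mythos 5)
Your reduction step is the same as the paper's: expand $f=\sum a_kz^k$, use rotational symmetry to see that monomials stay orthogonal with respect to $d\mu_{n,\alpha}$, and reduce the global inequality to the one-parameter family of coefficient inequalities $a_k=\frac{\Gamma(1+k)}{n!\,\Gamma(1+k-n)^2}\int_0^\infty \frac{e^{-r}r^{k-n}}{L_n(-r)}\,dr<1$. That part is fine. The genuine gap is that the coefficient inequality itself --- the entire content of the conjecture, and the only place where the restriction $n\le 4$ enters --- is never proved: your step 3 is a plan (``in principle verify by (i)--(iii)''), not an argument, and its anchor points are wrong. First, $\mu_{n,\alpha}$ is \emph{not} a probability measure for $n\ge 1$: its total mass is $\frac{1}{n!\,\alpha^{n}}\int_0^\infty e^{-r}/L_n(-r)\,dr$, and e.g.\ for $n=1$ the integral equals $eE_1(1)\approx 0.596$, not $1$ (this is exactly the value $g_1=0.596347$ appearing in the paper). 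Consequently your claimed equality $g_n(0)=1$ at $m=0$ is false; the value at the bottom coefficient is strictly less than $1$ (the paper records $A(2)=0.427\ldots$ for $n=2$), and equality is only approached in the limit $k\to\infty$, which is why the inequality is sharp but never attained. Second, the proposed mechanism ``show $g_n$ is decreasing in $m$ and bound it by its value at $m=0$'' is backwards: the sequence $A(k)$ is increasing toward $1$, so monotonicity plus the $m=0$ value gives nothing, and the delicate point is precisely to show the increasing sequence stays below $1$.

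For comparison, the paper handles this by exploiting the explicit expansion $L_n(-r)=\sum_{m=0}^n\binom{n}{m}\frac{r^m}{m!}$ to derive an exact recursion expressing $A(k+n)$ in terms of $A(k),\dots,A(k+n-1)$ (its identity \eqref{nrecu}), from which it gets both $\lim_{k\to\infty}A(k+n)=1$ (sharpness) and, for each $n\le 4$, the bound $A(j)<1$ by an induction of the form $j/(j+1)\le A(j)<1$ after a finite number of explicitly computed initial cases. If you want to salvage your route, you would need to replace the false normalization/monotonicity claims by something of this type: either the recursion argument, or a direct proof that $\frac{\Gamma(1+k)}{n!\,\Gamma(1+k-n)^2}\int_0^\infty \frac{e^{-r}r^{k-n}}{L_n(-r)}\,dr<1$ for all $k\ge n$, which your partial-fraction reduction alone does not deliver. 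Also note the restriction to $n\le 4$ reflects the difficulty of completing this induction (the paper conjectures the inequality for all $n$), not a known failure of monotonicity for $n\ge 5$.
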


Further Conjecture~\ref{c2} is reduced to the following \begin{conjecture}\label{c3}
Let $k\ge n$ and $$a_k=\frac{ \Gamma[1+k]}{n!\Gamma[1+k-n]^2}\int_0^\infty \frac{e^{-r}r^{k-n}}{L_n(-r)} dr,$$ where $L_n$ is Laguerre polynomial, and $n\ge 0$.

Then $a_k<1$.

\end{conjecture}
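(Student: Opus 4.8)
The plan is to reduce the claimed inequality $a_k<1$ to a uniform estimate valid for all $k\geq n$, handling only the small-$n$ cases $n\in\{0,1,2,3,4\}$ explicitly, as the statement of the paper's main result suggests is necessary. First I would rewrite $a_k$ in a more tractable form. Substituting the definition of $L_n(-r)=\sum_{j=0}^n\binom{n}{j}\frac{r^j}{j!}$, which is a polynomial with positive coefficients, note that $L_n(-r)\geq 1$ and more usefully $L_n(-r)\geq \binom{n+\ell}{n}\frac{r^\ell}{\ell!}\cdot(\text{something})$ is too lossy; instead the right move is to compare $a_k$ with the value it would take if $L_n(-r)$ were replaced by its dominant behavior. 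Observe that for $n=0$ we get $a_k=\frac{\Gamma[1+k]}{\Gamma[1+k]^2}\int_0^\infty e^{-r}r^k\,dr=\frac{\Gamma[1+k]}{\Gamma[1+k]}=1$ — wait, that gives $a_k=1$, so the $n=0$ borderline case must be treated by the separate exponential-gain argument from Conjecture~\ref{c1}, and for $n\geq 1$ we genuinely need strict inequality with room to spare. So I would fix $n\in\{1,2,3,4\}$ and proceed case by case.

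For fixed $n\geq 1$, the key step is to produce a good lower bound for the integral's denominator. Since $\frac{1}{L_n(-r)}$ is completely monotone-like (it is positive and decreasing on $(0,\infty)$ from $1$ to $0$), I would split $\int_0^\infty = \int_0^{R} + \int_R^\infty$ at a threshold $R=R(n,k)$ chosen near the mode $r\approx k-n$ of the measure $e^{-r}r^{k-n}\,dr$. On the bulk region I would use a two-sided bound $L_n(-r)\geq c_n r^n$ for $r\geq 1$ (with $c_n=\binom{2n}{n}/n!$ the leading coefficient squared... no: $L_n(-r)=\sum\binom nj r^j/j!$ has leading term $r^n/n!$, so $L_n(-r)\geq r^n/n!$ for all $r\geq 0$). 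This immediately yields
$$a_k\leq \frac{\Gamma[1+k]}{n!\,\Gamma[1+k-n]^2}\cdot n!\int_0^\infty e^{-r}r^{k-2n}\,dr=\frac{\Gamma[1+k]\,\Gamma[1+k-2n]}{\Gamma[1+k-n]^2}$$
for $k\geq 2n$, and by the log-convexity of the Gamma function (equivalently, $\Gamma[1+k]\Gamma[1+k-2n]<\Gamma[1+k-n]^2$ since $1+k$ and $1+k-2n$ straddle $1+k-n$ symmetrically) this ratio is strictly less than $1$. So the bulk of the work reduces to checking $k<2n$, i.e. finitely many values of $k$ for each of $n=1,2,3,4$.

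For those finitely many exceptional pairs $(n,k)$ with $n\leq k<2n$, I would compute $a_k$ in closed form. Each such integral $\int_0^\infty \frac{e^{-r}r^{k-n}}{L_n(-r)}\,dr$ is a rational-times-exponential integral against a polynomial of degree $n\leq 4$ in the denominator; partial fractions over the (real or complex) roots of $L_n(-r)$ reduces it to exponential integrals $\int_0^\infty \frac{e^{-r}r^{m}}{r-\rho}\,dr$, expressible via the exponential integral / incomplete Gamma function, and the resulting value of $a_k$ can then be bounded numerically (or symbolically) below $1$. This is the main obstacle: it is not a single clean inequality but a finite verification, and one must be careful that the closed forms are evaluated rigorously (e.g. via interval arithmetic or exact symbolic manipulation of the Kummer-function identity $F(1+n,1,r)=e^r L_n(-r)$, which lets one recognize $\int_0^\infty e^{-r}r^{k-n}/L_n(-r)\,dr$ through integral representations of hypergeometric functions). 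I expect that in every exceptional case $a_k$ comes out comfortably below $1$ with a fixed gap, so the restriction $n\leq 4$ is exactly what makes the finite check tractable; for general $n$ the bulk estimate still works but the finitely many small-$k$ cases proliferate and the closed forms involve complex roots of $L_n$, which is presumably why the paper stops at $n=4$.
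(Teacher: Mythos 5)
Your reduction to finitely many exceptional cases breaks down at the key Gamma-ratio step, and the error is not cosmetic: log-convexity of $\Gamma$ gives exactly the opposite of the inequality you invoke. Since $\log\Gamma$ is convex and $1+k-n$ is the midpoint of $1+k$ and $1+k-2n$, one has $\Gamma(1+k-n)^2\le \Gamma(1+k)\,\Gamma(1+k-2n)$, so your bulk estimate based on $L_n(-r)\ge r^n/n!$ only yields
\[
a_k\le \frac{\Gamma(1+k)\,\Gamma(1+k-2n)}{\Gamma(1+k-n)^2}=\prod_{j=0}^{n-1}\frac{k-j}{k-n-j}>1
\]
(for instance $n=1$, $k=2$ gives the value $2$), which proves nothing. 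The deeper reason no such crude bound can work is that $a_k\to 1$ from below as $k\to\infty$ (the paper shows eventually $k/(k+1)\le A(k)<1$), so the available margin is only of order $1/k$; discarding the lower-order terms of $L_n(-r)$ inflates the integral near the mode $r\approx k$ by a relative factor of order $n^2/k$, which exceeds that margin for every $k$, not just for $k<2n$. Hence the ``finite verification'' in your last paragraph never becomes finite, and the proposal does not establish $a_k<1$ for any $n\ge 1$. (Your side remark that $a_k=1$ identically when $n=0$ is correct; the strict inequality is only meaningful for $n\ge 1$.)

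For comparison, the paper avoids any lossy estimate on $L_n(-r)$: multiplying the integrand by $L_n(-r)/L_n(-r)$ and integrating term by term produces the exact identity \eqref{sum}, hence the recurrence \eqref{nrecu} expressing $A(k+n)$ through $A(k),\dots,A(k+n-1)$ with explicit coefficients $C(k,n)$, $C_h(k,n)$. From this it gets boundedness of $A$, the limit $A(k)\to 1$, and, for each $n\le 4$, a two-sided induction showing $j/(j+1)\le A(j)<1$ for all large $j$, combined with a finite numerical check of the initial values. If you want to rescue your approach you would need a lower bound for $L_n(-r)$ with relative error $o(1/k)$ near $r\approx k$, which in effect forces you to keep the whole polynomial --- that is, to return to the paper's exact identity.
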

Then we  prove Conjecture~\ref{c1}  for $n\le 4$. In other words, we prove
 \begin{theorem}\label{111}
 For $f\in \mathcal{F}_\alpha^2$, and a domain   $\Omega\subset \mathbb{C}$ with a finite Lebesgue measure $|\Omega|$,  we have $$\frac{\alpha}{\pi}\int_{\Omega} \frac{|f^{(n)}(z)|^2e^{-\alpha |z|^2}}{\alpha^n n ! L_n(-\alpha |z|^2)}dxdy \le (1-e^{-(n+1)|\Omega|})\|f\|^2_{2,\alpha},$$ where $L_n$ is Laguerre polynomial, and $0\le n\le 4$. The inequality is sharp but is never attained.
In particular for $n=1$, we have
$$\int_\Omega \frac{|f'(z)|^2 e^{-\alpha |z|^2}}{\alpha(1+\alpha|z|^2)}dA_\alpha(z)\le (1-e^{-2|\Omega|})\|f\|^2_{2,\alpha}.$$
In other words the differential operator 
$\mathcal{D}_n: \mathcal{F}_\alpha^2\to  \mathcal{F}^2_{n,\alpha}$,  is a contraction and satisfies local contraction property on sub-domains of complex plane.

\end{theorem}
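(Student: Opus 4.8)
The plan is to proceed in three stages: (i) a Nicola--Tilli type rearrangement argument reducing the localized estimate to the global contraction $\|f^{(n)}\|_{2,n,\alpha}^{2}\le\|f\|_{2,\alpha}^{2}$ (Conjecture~\ref{c2}); (ii) diagonalizing both norms in the monomial basis, which turns Conjecture~\ref{c2} into the scalar inequalities $a_k\le1$ (Conjecture~\ref{c3}); (iii) evaluating the integrals defining $a_k$ in closed form and showing $a_k<1$ for $0\le n\le4$.

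For Stage~(i), set $u=\dfrac{|f^{(n)}|^{2}e^{-\alpha|z|^{2}}}{\alpha^{n}n!\,L_n(-\alpha|z|^{2})}$, so that the left-hand side of the theorem is $\int_{\Omega}u\,dA_\alpha$ and $\int_{\C}u\,dA_\alpha=\|f^{(n)}\|_{2,n,\alpha}^{2}=:I_n$. Since $f^{(n)}=(f-T_nf)^{(n)}$ and $\|f-T_nf\|_{2,\alpha}\le\|f\|_{2,\alpha}$, it suffices to prove the inequality with $\|f\|_{2,\alpha}^{2}$ replaced by $M:=\|f-T_nf\|_{2,\alpha}^{2}$; by Proposition~\ref{hilbert11} one has $u\le M$ pointwise, and Conjecture~\ref{c2} applied to $f-T_nf$ gives $I_n\le M$. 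By the bathtub principle it is enough to bound $\Phi(s):=\int_{\{u>t\}}u\,dA_\alpha$ with $dA_\alpha(\{u>t\})=s$; this $\Phi$ is the primitive of the decreasing rearrangement of $u$, hence concave with $\Phi(0)=0$, $\Phi(\infty)=I_n$ and $\Phi'(s)=t(s)$. The analytic core is the distributional inequality $\Delta\log u\ge-4\alpha(n+1)$, which follows from $\Delta\log|f^{(n)}|^{2}\ge0$ together with a Laguerre inequality of the form $Q_n(s)\le n$ ($s\ge0$, with equality at $s=0$), valid in the range $n\le4$. Applying the divergence theorem, the co-area formula and the planar isoperimetric inequality to the super-level sets of $\log u$ then shows that the distribution function $\sigma\mapsto|\{\log u>\sigma\}|$ decreases at speed at least $\pi/(\alpha(n+1))$, equivalently $t(s')\ge t(s)\,e^{-(n+1)(s'-s)}$ for $s'\ge s$. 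Consequently $\Phi(s)=I_n-\int_s^{\infty}t(s')\,ds'\le I_n-\tfrac{t(s)}{n+1}=I_n-\tfrac{1}{n+1}\Phi'(s)$, i.e. $\Phi'(s)\le(n+1)\bigl(I_n-\Phi(s)\bigr)\le(n+1)\bigl(M-\Phi(s)\bigr)$ with $\Phi(0)=0$, and Gr\"onwall's inequality yields $\Phi(s)\le M\bigl(1-e^{-(n+1)s}\bigr)\le\|f\|_{2,\alpha}^{2}\bigl(1-e^{-(n+1)s}\bigr)$. Since, once $f^{(n)}\not\equiv0$, the strict inequalities $a_k<1$ force $I_n<M$, the bound is in fact strict, hence never attained for $n\ge1$ (for $n=0$ one recovers the sharp Theorem~\ref{nicola}).

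For Stage~(ii), the monomials $z^{k}$ are pairwise orthogonal in both $\mathcal F^{2}_{\alpha}$ and $\mathcal F^{2}_{n,\alpha}$, and a polar-coordinate computation with $r=\alpha|z|^{2}$ gives $\|z^{k}\|_{2,\alpha}^{2}=k!/\alpha^{k}$ and $\|z^{k-n}\|_{2,n,\alpha}^{2}=\tfrac{1}{\alpha^{k}n!}\int_0^{\infty}r^{k-n}e^{-r}L_n(-r)^{-1}\,dr$. Hence for $f=\sum_k c_k z^k$ one obtains $\|f^{(n)}\|_{2,n,\alpha}^{2}=\sum_{k\ge n}a_k\,|c_k|^{2}\,k!/\alpha^{k}$ with $a_k$ exactly as in Conjecture~\ref{c3}, so that Conjecture~\ref{c2} holds for every $f$ if and only if $a_k\le1$ for all $k\ge n$ (the converse by testing on $f=z^{k}$).

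For Stage~(iii), when $n\le4$ the polynomial $L_n(-r)$ has $n$ distinct roots, all negative as functions of $r$; a partial-fraction decomposition of $r^{k-n}/L_n(-r)$ followed by polynomial division expresses $\int_0^{\infty}r^{k-n}e^{-r}L_n(-r)^{-1}\,dr$ as an explicit combination of factorials and of the transcendental constants $\int_0^{\infty}e^{-r}(r-\rho)^{-1}\,dr$, one for each root $\rho<0$ (for $n=1$ only $e\,E_1(1)$ occurs). From the resulting closed form for $a_k$ one reads off $a_k<1$ for all $k\ge n$ when $1\le n\le4$ (for instance via monotonicity of $a_k$ in $k$ and $\lim_{k\to\infty}a_k=1$), while $a_k\equiv1$ for $n=0$. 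Combined with Stages (i)--(ii) this proves the theorem, and sharpness follows by letting $k\to\infty$ in the suitably localized test functions $z^{k}$. The main obstacle will be Stage~(iii): already for $n=4$ the roots of $L_4$ carry nested radicals, so the closed form for $a_k$ is cumbersome and the uniform bound $a_k<1$ must be extracted by a careful monotonicity and asymptotic analysis — this is precisely where the hypothesis $n\le4$ is used; a subsidiary point is the verification of $Q_n(s)\le n$ in Stage~(i).
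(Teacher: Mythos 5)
Your Stages (i) and (ii) reproduce the paper's strategy almost verbatim: the bathtub reduction to super-level sets, the bound $\Delta\log u\ge -4\alpha(n+1)$ on $\{u>t\}$ (harmonicity of $\log|f^{(n)}|^2$ plus the Laguerre/Kummer estimate, which in the paper is Lemma 2.4, $\Delta\log F[1+n,1,\alpha|z|^2]\le 4\alpha(1+n)$), the coarea formula and the planar isoperimetric inequality giving the differential inequality for $t(s)$, and the closing step using the global contraction $I_n\le\|f\|^2_{2,\alpha}$. Your Gr\"onwall argument for $\Phi'(s)\le (n+1)(I_n-\Phi(s))$ is an equivalent repackaging of the paper's convexity argument for $G(\sigma)=I(-\tfrac{1}{n+1}\log\sigma)$; the diagonalization over monomials and the reduction to $a_k<1$ is exactly the paper's Lemma 2.6. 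Two small inaccuracies: the Kummer inequality is valid for all $n$, not only $n\le4$ (the restriction $n\le4$ enters only through $a_k<1$), and you leave that inequality unverified, whereas the paper proves it by an explicit hypergeometric computation showing a certain combination of Kummer functions is nonpositive.

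The genuine gap is Stage (iii), which is precisely the part of the theorem where the hypothesis $n\le 4$ does real work, and your proposal does not contain a proof of it. Saying that from a partial-fraction closed form for $a_k$ ``one reads off $a_k<1$, for instance via monotonicity of $a_k$ in $k$ and $\lim_k a_k=1$'' assumes monotonicity of $a_k$, which is nowhere established and is not obviously easier than the statement $a_k<1$ itself; nothing in your outline supplies it. The paper's actual argument (Lemma 2.7) is quite different and substantially more delicate: from the expansion $L_n(-r)=\sum_m \binom{n}{m}r^m/m!$ it derives the exact identity \eqref{sum} and hence the recurrence \eqref{nrecu} expressing $A(k+n)$ through $A(k),\dots,A(k+n-1)$; it then checks finitely many values numerically (e.g.\ for $n=4$ the comparison with $j/(j+1)$ fails for $17\le j\le 27$, so the induction can only start at $j=28$) and runs a two-sided induction proving $j/(j+1)<A(j)<1$ for large $j$, together with the limit $A(j)\to1$ used for sharpness. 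Without either this recurrence-plus-induction scheme or a proved monotonicity statement, your Stage (iii) is a plan rather than a proof, so the argument as written does not establish the theorem for $1\le n\le 4$ (for $n=0$ it degenerates to the known Nicola--Tilli case). The sharpness discussion is otherwise consistent with the paper, which also uses the monomials $z^k$ with $k\to\infty$.
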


\begin{remark}
Note that for $n=0$ the previous theorem is exactly the main result of Nicola and Tilli in \cite{tilli}. Note also that $f\in \mathcal{F}_\alpha^2 \not \Rightarrow f'\in \mathcal{F}_\alpha^2$ (see a counterexample in \cite{cmft}), so Laguerre polynomials are inevitable. 
\end{remark}

\begin{corollary}
Under conditions of Theorem~\ref{111} we have $$\int_\Omega \frac{|f'(z)|^2 e^{-\alpha |z|^2}}{\alpha(1+\alpha|z|^2)}dA_\alpha(z)\le (1-e^{-2|\Omega|})d^2_\alpha(f, \mathcal{C}),$$ where $\mathcal{C}$ is the space of all constants $d_\alpha$ is the induced metric in the Fock space $\mathcal{F}_\alpha^2$.
\end{corollary}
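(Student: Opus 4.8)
The plan is to deduce the Corollary from the $n=1$ case of Theorem~\ref{111} by a one-line translation trick. The crucial observation is that the quantity on the left-hand side,
$$\Phi(f):=\int_\Omega \frac{|f'(z)|^2 e^{-\alpha |z|^2}}{\alpha(1+\alpha|z|^2)}\,dA_\alpha(z),$$
depends on $f$ only through its derivative, so $\Phi(f)=\Phi(f-c)$ for every constant $c\in\mathcal C$. Since $\mathcal C\subset\mathcal F_\alpha^2$, the function $g:=f-c$ again belongs to $\mathcal F_\alpha^2$, and applying Theorem~\ref{111} with $n=1$ to $g$ yields $\Phi(f)=\Phi(g)\le (1-e^{-2|\Omega|})\|g\|_{2,\alpha}^2=(1-e^{-2|\Omega|})\|f-c\|_{2,\alpha}^2$. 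Taking the infimum over $c\in\mathcal C$ gives exactly $\Phi(f)\le(1-e^{-2|\Omega|})\,d_\alpha^2(f,\mathcal C)$.

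To make the right-hand side concrete (and to see that the infimum is attained, so the passage to $d_\alpha$ is harmless), I would note that $\mathcal C$ is the one-dimensional closed subspace of $\mathcal F_\alpha^2$ spanned by the constant function $1$, with $\|1\|_{2,\alpha}^2=\int_{\mathbb C}d\mu_\alpha=1$ because $d\mu_\alpha$ is a probability measure. Hence the nearest-point projection of $f$ onto $\mathcal C$ is $c_0=\langle f,1\rangle_\alpha$, and by the reproducing formula \eqref{hilbert} evaluated at $z=0$ (where $K_\alpha(0,w)=1$) this equals $f(0)$. Therefore $d_\alpha^2(f,\mathcal C)=\|f-f(0)\|_{2,\alpha}^2=\|f\|_{2,\alpha}^2-|f(0)|^2$, and the Corollary is the inequality obtained from Theorem~\ref{111} with this sharper right-hand side.

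There is essentially no obstacle here: all the analytic work is contained in Theorem~\ref{111}, and the Corollary is merely the remark that the functional $\Phi$ annihilates constants, so the geometrically correct majorant is the squared distance to $\mathcal C$ rather than $\|f\|_{2,\alpha}^2$. I would also point out that this is consistent with Proposition~\ref{hilbert11}, whose pointwise bound for $n=1$ already carries the factor $\|f-T_1(f)\|_{2,\alpha}=\|f-f(0)\|_{2,\alpha}$, and that the sharpness assertion of Theorem~\ref{111} (sharp but never attained) transfers verbatim, since it concerns precisely the translated family $g=f-c$. Finally, the same argument with $\mathcal C$ replaced by the space $\mathcal P_{n-1}$ of polynomials of degree at most $n-1$ and $c$ replaced by $T_n(f)$ gives, for $0\le n\le 4$, the stronger statement $\int_\Omega |f^{(n)}(z)|^2\,d\mu_{n,\alpha}(z)\le(1-e^{-(n+1)|\Omega|})\,d_\alpha^2(f,\mathcal P_{n-1})$, which I would record as a remark.
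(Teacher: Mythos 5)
Your argument is correct and is precisely the intended one (the paper states this corollary without a separate proof): since the left-hand side depends on $f$ only through $f'$, it is unchanged under $f\mapsto f-c$ for $c\in\mathcal{C}$, so applying Theorem~\ref{111} with $n=1$ to $f-c$ and taking the infimum over constants gives the bound with $d^2_\alpha(f,\mathcal{C})$. Your added identifications --- that the infimum is attained at $c_0=\langle f,1\rangle_\alpha=f(0)$, so $d^2_\alpha(f,\mathcal{C})=\|f\|^2_{2,\alpha}-|f(0)|^2$, and the analogous statement with $\mathcal{P}_{n-1}$ and $T_n(f)$ for $n\le 4$ --- are also correct.
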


\section{Proof of main results}

We need the following corollary of Proposition~\ref{hilbert11}.

\begin{corollary}\label{prim}
  If $f\in F_\alpha^2$, then $$ \lim_{|z|\to \infty} \frac{|f^{(n)}(z)|}{\sqrt{\alpha^n n!L_n(-\alpha |z|^2)}}e^{-\alpha  |z|^2/2}=0.$$ and in particular, for $n=1$, we have $$ \lim_{|z|\to \infty} \frac{|f'(z)|}{\sqrt{\alpha(1+\alpha |z|^2)}}e^{-\alpha  |z|^2/2}=0.$$
\end{corollary}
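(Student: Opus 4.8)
The plan is to combine the sharp pointwise bound of Proposition~\ref{hilbert11} with the density of polynomials in the Fock space. Fix $\varepsilon>0$ and choose a polynomial $p$ with $\|f-p\|_{2,\alpha}<\varepsilon$; this is possible since the normalized monomials form an orthonormal basis of $\mathcal{F}_\alpha^2$. Apply Proposition~\ref{hilbert11} to $g=f-p\in\mathcal{F}_\alpha^2$. Since $g-T_n(g)$ is the orthogonal projection of $g$ onto the closed span of $\{w^k:k\ge n\}$, the monomials being mutually orthogonal in $\mathcal{F}_\alpha^2$, we have $\|g-T_n(g)\|_{2,\alpha}\le\|g\|_{2,\alpha}=\|f-p\|_{2,\alpha}$. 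Because $g^{(n)}=f^{(n)}-p^{(n)}$, this yields
$$\frac{|f^{(n)}(z)-p^{(n)}(z)|}{e^{\alpha|z|^2/2}\sqrt{\alpha^n n!\,L_n(-\alpha|z|^2)}}\le\|f-p\|_{2,\alpha}<\varepsilon\qquad\text{for all }z\in\mathbb{C}.$$

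Next I would note that for the now-fixed polynomial $p$ the numerator $|p^{(n)}(z)|$ grows at most polynomially in $|z|$, and $L_n(-\alpha|z|^2)$ is a polynomial of degree $n$ in $|z|^2$, so the quotient's denominator already grows faster than any power of $|z|$ thanks to the factor $e^{\alpha|z|^2/2}$. Hence
$$\lim_{|z|\to\infty}\frac{|p^{(n)}(z)|}{e^{\alpha|z|^2/2}\sqrt{\alpha^n n!\,L_n(-\alpha|z|^2)}}=0.$$
Combining the two displays by the triangle inequality gives $\limsup_{|z|\to\infty}\dfrac{|f^{(n)}(z)|}{e^{\alpha|z|^2/2}\sqrt{\alpha^n n!\,L_n(-\alpha|z|^2)}}\le\varepsilon$, and letting $\varepsilon\to0$ proves the first assertion. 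The case $n=1$ is obtained by substituting $L_1(-\alpha|z|^2)=1+\alpha|z|^2$.

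There is no genuine obstacle here; the proof is a routine "dense subset plus uniform bound" argument. The only points requiring a word of justification are the density of polynomials in $\mathcal{F}_\alpha^2$ and the fact that the Taylor truncation $T_n$ does not increase the $\mathcal{F}_\alpha^2$-norm, both of which follow at once from the orthogonality of the monomials; the growth comparison in the second step is elementary.
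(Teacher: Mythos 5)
Your argument is correct and follows essentially the same route as the paper: approximate $f$ by a polynomial in $\mathcal{F}_\alpha^2$, apply the sharp pointwise bound of Proposition~\ref{hilbert11} to the difference, observe that the polynomial term tends to zero against the Gaussian weight, and conclude via a $\limsup\le\varepsilon$ argument. The only difference is cosmetic: the paper writes the case $n=1$ and remarks the general case is identical, while you carry out general $n$ and additionally note $\|g-T_n(g)\|_{2,\alpha}\le\|g\|_{2,\alpha}$, which is a harmless extra justification.
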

\begin{proof} Let us prove for $n=1$ and note that the same proof works for every $n$. Since the space of polynomials is dense in the Fock space, for given $\epsilon>0$, there is a polynomial $P$ so that $\|f-P\|_{2,\alpha}\le \epsilon$. Then we have
$$\|f\|_{2,\alpha}\le \|f-P\|_{2,\alpha}+\|P\|_{2,\alpha}.$$
From \eqref{est2}, we get $$|f'(z)-P'(z)|\le  \sqrt{\alpha \left(1+\alpha |z|^2\right)}e^{\alpha  |z|^2/2} \|f-P\|_{2,\alpha}
\le \sqrt{\alpha  \left(1+\alpha |z|^2\right)}e^{\alpha  |z|^2/2}\epsilon.$$
On the other hand $$  \lim_{|z|\to \infty}\frac{|P'(z)|}{\sqrt{\alpha(1+\alpha |z|^2)}}e^{-\alpha  |z|^2/2}=0.$$
So
$$\limsup_{|z|\to \infty}\frac{|f'(z)|}{\sqrt{\alpha(1+\alpha |z|^2)}}e^{-\alpha  |z|^2/2 }\le \epsilon.$$ Thus $$\limsup_{|z|\to \infty} \frac{|f'(z)|}{\sqrt{\alpha(1+\alpha |z|^2)}}e^{-\alpha  |z|^2/2}=0$$ as it is stated.
\end{proof}

\begin{lemma}\label{kop} Let $$u(z) = \frac{|f^{(n)}(z)|^2e^{-\alpha |z|^2}}{\alpha^n n ! L_n(-\alpha |z|^2)}$$ and $A_t=\{z: u(z)>t\}$.
If $|\Omega|=s=\mu(t)$, then $$\int_{\Omega} u(z) dxdy\le \int_{A_{t}} u(z)dxdy.$$
\end{lemma}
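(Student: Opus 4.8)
The plan is to use the classical "bathtub principle" (or rearrangement-type argument): among all sets of fixed measure $s$, the superlevel set $A_t$ of $u$ maximizes the integral $\int u$. First I would observe that, by the very definition of $A_t = \{u > t\}$ and the choice $s = \mu(t) = |A_t|$, we have a clean decomposition. Write $\Omega = (\Omega \cap A_t) \cup (\Omega \setminus A_t)$ and $A_t = (\Omega \cap A_t) \cup (A_t \setminus \Omega)$, so that
\begin{equation*}
\int_{A_t} u - \int_{\Omega} u = \int_{A_t \setminus \Omega} u - \int_{\Omega \setminus A_t} u.
\end{equation*}
On $A_t \setminus \Omega$ we have $u > t$, hence $\int_{A_t \setminus \Omega} u \ge t\,|A_t \setminus \Omega|$; on $\Omega \setminus A_t$ we have $u \le t$, hence $\int_{\Omega \setminus A_t} u \le t\,|\Omega \setminus A_t|$. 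Therefore
\begin{equation*}
\int_{A_t} u - \int_{\Omega} u \ge t\bigl(|A_t \setminus \Omega| - |\Omega \setminus A_t|\bigr).
\end{equation*}

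The final step is to check that the measures in parentheses match. Since $|\Omega| = s = |A_t|$ and both sets share the common part $\Omega \cap A_t$, we get $|A_t \setminus \Omega| = |A_t| - |\Omega \cap A_t| = |\Omega| - |\Omega \cap A_t| = |\Omega \setminus A_t|$, so the right-hand side is $\ge 0$ (in fact equals $0$ times $t$, but the inequality is all we need). This yields $\int_{\Omega} u \le \int_{A_t} u$, as claimed.

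The only genuine subtlety — and the one place I would be careful — is the well-definedness of the level $t \mapsto \mu(t) = |\{u > t\}|$ and the requirement that there actually exist a $t$ with $\mu(t) = s$; if $\mu$ has jumps (i.e. $u$ is constant on a set of positive measure at some level), one cannot hit every value $s$ exactly, and the statement must be read with $A_t$ allowed to be a superlevel set together with a portion of the corresponding level set, or one passes to $\{u \ge t\}$. Here this is not a real obstacle: by Corollary~\ref{prim} the function $u$ tends to $0$ at infinity and is continuous (being built from the entire function $f^{(n)}$ and the strictly positive real-analytic weight $\alpha^n n!\,L_n(-\alpha|z|^2)$), so its superlevel sets have finite measure for $t>0$ and $\mu$ is, generically, continuous; the borderline case is handled by the standard adjustment of including part of the level set $\{u = t\}$ in $A_t$, and the inequality above is unaffected since $u = t$ there. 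I would state the lemma with this understanding and note that in the application only the inequality (not attainment) is used.
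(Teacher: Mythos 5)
Your proof is correct and follows essentially the same route as the paper: the same decomposition into $\Omega\cap A_t$ and the two symmetric differences, the same comparison of $u$ with the level $t$ on each piece, and the same use of $|\Omega|=|A_t|$ to match the measures (your closing remark about possible jumps of $\mu$ is harmless extra caution, since the lemma's hypothesis already supplies a $t$ with $\mu(t)=s$). No gaps.
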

\begin{proof}[Proof of Lemma~\ref{kop}]
As in \cite{tilli}, let $\Omega_1=\Omega\cap A_t$. Then $\Omega = \Omega_1\cup (\Omega\setminus \Omega_1)$ and $A_t = \Omega_1 \cap (A_1\setminus \Omega_1)$. We also have $$|(\Omega\setminus \Omega_1)|=|\Omega|-|\Omega_1|=|A_t|-|\Omega_1|=|A_t\setminus \Omega_1|.$$
If $z\in (\Omega\setminus \Omega_1)$, then $u(z)\le t$ and if $w\in A_t\setminus \Omega_1$ then $u(w)>t$.

Thus \[\begin{split}\int_{\Omega} u(z) dxdy&=\int_{\Omega_1} u(z) dxdy+\int_{\Omega\setminus \Omega_1} u(z) dxdy
\\& \le \int_{\Omega_1} u(z) dxdy+s\int_{\Omega\setminus \Omega_1}  dxdy
\\& =\int_{\Omega_1} u(z) dxdy+s\int_{A_t\setminus \Omega_1}  dxdy
\\& \le\int_{\Omega_1} u(z) dxdy+\int_{A_t\setminus \Omega_1} u(z) dxdy
\\&=\int_{A_{t}} u(z)dxdy.
\end{split}
\]
\end{proof}

\subsection{Proof Theorem~\ref{111} for $n=1$.}
We need the following general global estimate.
\begin{lemma}\label{import}
Assume that $\alpha>0$ and that $f\in F_\alpha^2$. Then $$\frac{1}{\pi}\int_{\mathbb{C}} \frac{|f'(z)|^2 e^{-\alpha |z|^2}}{1+\alpha|z|^2}dxdy\le \|f\|_{2,\alpha}^2.$$
\end{lemma}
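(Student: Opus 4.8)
The plan is to expand $f$ in the orthonormal basis $e_k(z)=\sqrt{\alpha^k/k!}\,z^k$ of $\mathcal F_\alpha^2$ and to reduce the inequality to a one–variable estimate that no longer involves $\alpha$. Write $f(z)=\sum_{k\ge 0}c_k e_k(z)$, so that $\|f\|_{2,\alpha}^2=\sum_{k\ge 0}|c_k|^2$ and $f'(z)=\sum_{k\ge 1}c_k\sqrt{\alpha^k/k!}\,k\,z^{k-1}$ is entire. Since the weight $\frac{e^{-\alpha|z|^2}}{\pi(1+\alpha|z|^2)}$ is radial, Parseval on each circle $|z|=r$ followed by Tonelli gives
\[
\frac1\pi\int_{\mathbb C}\frac{|f'(z)|^2e^{-\alpha|z|^2}}{1+\alpha|z|^2}\,dA(z)=\sum_{k\ge 1}a_k\,|c_k|^2,\qquad a_k:=\frac{\alpha^k k^2}{k!}\cdot\frac1\pi\int_{\mathbb C}\frac{|z|^{2(k-1)}e^{-\alpha|z|^2}}{1+\alpha|z|^2}\,dA(z),
\]
where the right–hand side is a priori allowed to be $+\infty$. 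Passing to polar coordinates and substituting $r\mapsto\alpha r^2$ makes $\alpha$ cancel and yields $a_k=\dfrac{k}{(k-1)!}\displaystyle\int_0^\infty\frac{r^{k-1}e^{-r}}{1+r}\,dr$, which is exactly the quantity of Conjecture~\ref{c3} in the case $n=1$ (here $L_1(-r)=1+r$). Thus Lemma~\ref{import} is equivalent to $a_k\le 1$ for every $k\ge 1$: granting that, $\sum_{k\ge 1}a_k|c_k|^2\le\sum_{k\ge 1}|c_k|^2\le\|f\|_{2,\alpha}^2$, which also shows the integral is finite.

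To prove $a_k\le 1$ I would study $b_m:=\int_0^\infty\frac{r^m e^{-r}}{1+r}\,dr$ for $m\ge 0$, so that $a_{m+1}=\frac{m+1}{m!}\,b_m$. The identity $\frac{r^m}{1+r}=r^{m-1}-\frac{r^{m-1}}{1+r}$ gives the recursion $b_m=(m-1)!-b_{m-1}$ for $m\ge 1$, equivalently $a_{m+1}=\frac{m+1}{m}\bigl(1-\frac{a_m}{m}\bigr)$. I would then prove by induction on $m\ge 1$ the two–sided bound $\frac{m}{m+1}<a_m<1$: from the recursion, $a_{m+1}<1$ is equivalent to $a_m>\frac{m}{m+1}$, which is part of the hypothesis, while $a_{m+1}>\frac{m+1}{m+2}$ is equivalent to $a_m<\frac{2m}{m+2}$, which for $m\ge 2$ follows from $a_m<1$ since then $\frac{2m}{m+2}\ge 1$. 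This closes the induction for $m\ge 2$; the single step $m=1\to m=2$ additionally needs $a_1<\frac23$, not merely $a_1<1$.

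The real work is therefore the base case, i.e. locating the transcendental constant $a_1=b_0=\int_0^\infty\frac{e^{-r}}{1+r}\,dr=e\int_1^\infty t^{-1}e^{-t}\,dt$ strictly between $\frac12$ and $\frac23$. I would do this by integration by parts: writing $J=\int_1^\infty t^{-1}e^{-t}\,dt$, $K=\int_1^\infty t^{-2}e^{-t}\,dt$, $M=\int_1^\infty t^{-3}e^{-t}\,dt$, one gets $J=e^{-1}-K$ and $K=e^{-1}-2M$, and the trivial monotonicity inequalities $0<M<K<J$ then force $2J>e^{-1}$ and $3K>e^{-1}$; hence $\frac1{2e}<J<\frac{2}{3e}$, i.e. $\frac12<b_0<\frac23$, and the induction is initialized. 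I expect this slightly fussy estimate of $b_0$, together with the bookkeeping needed to carry both halves of the inductive inequality simultaneously, to be the main obstacle; the orthogonal expansion, the polar–coordinate reduction, and the recursion for $b_m$ are all routine. Substituting $a_k\le 1$ back into the first displayed identity then completes the proof.
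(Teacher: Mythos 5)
Your proof is correct and follows essentially the same route as the paper: expand $f$ in monomials, reduce the inequality to the coefficient bound $a_k<1$ (the paper's $g_k=keE_k$, which equals your $\frac{k}{(k-1)!}\int_0^\infty\frac{r^{k-1}e^{-r}}{1+r}\,dr$), derive the same integration-by-parts recursion, and run the same two-sided induction $\frac{k}{k+1}<a_k<1$. The only (welcome) difference is that you establish the base case $\tfrac12<a_1<\tfrac23$ by a rigorous integration-by-parts estimate, where the paper simply quotes the numerical value $g_1=0.596347$.
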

\begin{proof} Since $f$ is an entire holomorphic functions, it has a Taylor representation:
$$f(w) =\sum_{k=0}^\infty a_k w^k.$$
Then
$$f'(w) = \sum_{k=1}^\infty k a_k w^{k-1}.$$
Thus $$\|f\|_{2,\alpha}^2=\sum_{k=0}^\infty |a_k|^2\frac{k!}{\alpha^k}.$$
Moreover \[\begin{split}\frac{\alpha}{\pi}\int_{\mathbb{C}} \frac{|f'(z)|^2 e^{-\alpha |z|^2}}{\alpha(1+\alpha|z|^2)}dxdy&=2\alpha\sum_{k=1}^\infty k^2|a_k|^2\int_0^\infty \frac{r^{2(k-1)+1}  e^{-\alpha r^2}}{\alpha(1+\alpha r^2)}dr
\\&=\sum_{k=1}^\infty k^2|a_k|^2  e E_k \frac{\Gamma(k)}{\alpha^k}\\&=\sum_{k=0}^\infty g_k|a_k|^2\frac{k!}{\alpha^k},\end{split}\] where $$E_k= E_k(1)=\int_1^\infty \frac{e^{-t}}{t^k}dt,$$ and
 $g_k= k e E_k.$ 
We need the following
\\
{\bf Claim.} \emph{For every $k\ge 1$ we have $g_k<1$. Moreover \begin{equation}\label{gk}\lim_{k\to\infty} g(k)=1.\end{equation}}
By using partial integration we get $$g_{k+1} = \frac{k+1}{k^2}(k-g_k).$$ Further $1/2\le g_1=0.596347<1$. Now  we prove by induction that $$\frac{k}{k+1}<g_k<1$$ for $k\ge 2$.
In order to do so let $$h(y) := \frac{k+1}{k^2}(k-y).$$ Then it is  clear $h(\frac{k}{k+1})=1$ and $$h(1)=\frac{k^2-1}{k^2}\ge \frac{k+1}{k+2}.$$
Thus the claim follows.

Since $e kE_k< 1$, we obtain that $$\frac{1}{\pi}\int_{\mathbb{C}} \frac{|f'(z)|^2 e^{-\alpha |z|^2}}{1+\alpha|z|^2}dxdy\le \|f\|_{2,\alpha}^2,$$ as claimed.

\end{proof}


\begin{proof}[Proof of Theorem~\ref{111} for $n=1$.]
Let $$u(z) = \frac{|f'(z)|^2 e^{-\alpha |z|^2}}{\alpha(1+\alpha|z|^2)}$$ and
define $\mu(t)=|A_t|.$  Then by Corollary~\ref{prim},  $A_t=\{z: u(z)>t\}$ is compactly supported in $\mathbb{C}$ and has a smooth boundary $\partial A_t=\{z: u(z)=t\}$ for almost every $t\in(0, \max u)$. Also $\lim_{|z|\to\infty} u(z)=0$ uniformly provided that $f\in L^2(\mathbf{C}, d\mu_\alpha)$,  where $$d\mu_\alpha=\frac{\alpha}{\pi}e^{-\alpha|z|^2}dxdy.$$  Let, $s=\mu(t)$ and define $$I(s)=\frac{\alpha}{\pi}\int_{A_t} u(z) dxdy.$$
Then as in \cite[Lemma~3.4]{tilli} we can prove that $I$ is smooth and $$I'(s) = \mu^{-1}(s).$$
In view of Lemma~\ref{kop}, we need to prove that $$I(s)\le (1-e^{-2|A_t|})\|f\|_{2,\alpha}.$$ Assume as we may that $\|f\|_{2,\alpha}=1$. Further we have $|A_t|=s$. Therefore we need to show that $$I(s)\le (1-e^{-2s}).$$
Then by using the Cauchy inequality to the length of $\partial A_t$ we obtain
$$|\partial A_t|^2\le \int_{\partial A_t} |\nabla u|^{-1} ds\cdot \int_{\partial A_t} |\nabla u|ds.$$
Further by \cite[Lemma~3.2]{tilli} we have $$\int_{\partial A_t} |\nabla u|^{-1} ds=-\mu'(t).$$ Let $\nu$ be the out-pointing unit normal at the smooth curve $\partial A_t$. Note that this curve is smooth for almost every $t>0$. This can be shown by using a similar approach as in \cite{tilli}.
Now $$ |\nabla u|=-\left<\nabla u, \nu\right >=-t\left<\nabla \log u, \nu\right >,$$ for $z\in \partial A_t=\{z: u(z)=t\}$.
Thus $$\int_{\partial A_t} |\nabla u|ds=-t\int_{\partial A_t} \left<\nabla \log u, \nu\right >ds .$$
By Green theorem we have $$\int_{\partial A_t} \left<\nabla \log u, \nu\right >ds=\int_{ A_t} \Delta \log u dA(z)$$
Since \begin{equation}\label{delu}\Delta \log u = -4\alpha\left(1+\frac{1}{\left(1+\alpha |z|^2\right)^2}\right),\end{equation} because $f'(z)$ is nonvanishing holomorphic function on $A_t$, 
we obtain
\[\begin{split}
   \int_{\partial A_t} \left<\nabla \log u, \nu\right >ds & =4t\alpha \int_{A_t} \left(1+\frac{1}{\left(1+\alpha |z|^2\right)^2}\right)dA(z) \\
     & =4t\pi  \int_{A_t} \left(1+\frac{1}{\left(1+\alpha |z|^2\right)^2}\right)dA_\alpha(z)\\ &\le 8 t \pi |A_t|.
\end{split}\]
Now we use the isoperimetric inequality for the plane: $$ 4 \pi |A_t|\le |\partial A_t|^2,$$ and obtain   $$4\pi \mu(t)\le (-\mu'(t))(8t\pi (\mu(t))).$$
The last inequality implies the inequality:  $$1 + 2 \frac{q(s)}{q'(s)}\le 0,$$ where $q(s)=\mu^{-1}(s)$. Because $q'(s)<0$, we obtain $q'(s)+2q(s)\ge 0$. Thus the function $$Q(s)=q(s)e ^{2s}$$ is increasing, because $$Q'(s) = e^{2s}(2q(s)+q'(s))\ge 0.$$

Therefore $$G(\sigma)=I(-1/2 \log \sigma)$$ is a convex function. Namely $$G'(\sigma) = -\frac{\mu^{-1}(-1/2 \log \sigma)}{2\sigma}=-e^{2s} q(s)=-Q(s),$$ where $s=e^{-2\sigma}$.  So $$G''(\sigma)=2Q'(s)e^{-2\sigma}>0.$$ Moreover $G$ satisfies the conditions  $G(0)\le 1$ and $G(1)=0$.
The first condition is satisfied because of Lemma~\ref{import}. Namely $$G(0) = I(+\infty)=\int_{A_0} u(z) dA_\alpha\le 1.$$
On the other hand
$$G(1) =  I(0)=\int_{A_\infty} u(z) dA_\alpha=0.$$

By using the convexity of $G$ we have $G(\sigma)\le (1-\sigma)$ and this implies the inequality 
\begin{equation}\label{iprim}I(s)\le (1-e^{-2s}).\end{equation}
To prove the sharpness, let $f(z) =z^m$. Then in view of \eqref{gk} we have $$\lim_{R\to \infty, m\to\infty} J(R,m)=1,$$ where
$$J(R,m)=\frac{\int_{|z|<R} \frac{|f'(z)|^2 e^{-\alpha |z|^2}}{\alpha(1+\alpha|z|^2)}dxdy}{(1-e^{-2\pi R^2})\|f\|^2_{2,\alpha}}.$$
\end{proof}

\subsection{The proof of Theorem~\ref{111} for $n\ge 2$}
The proof is just an imitation of the case $n=1$, so we omit some details. We begin by the following lemma
\begin{lemma}
Let $L(x,y) =\log F[1+n, 1, \alpha(x^2+y^2)]$, where $F$ is Kummer function.  Then $$\Delta L(x,y)\le 4\alpha (1+n).$$
\end{lemma}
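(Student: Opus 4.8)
The plan is to reduce the two-dimensional Laplacian to a one-variable computation in the radial variable $r^2 = x^2+y^2$, and then exploit the fact that $F[1+n,1,r] = e^r L_n(-r)$ together with the differential equation and positivity properties of Laguerre polynomials. First I would set $\varphi(r) = F[1+n,1,\alpha r]$ with $r = x^2+y^2$, so that $L = \log\varphi(r)$; since $L$ is radial in $(x,y)$, writing $\rho = |z|$ and using $\Delta = \partial_{\rho\rho} + \rho^{-1}\partial_\rho$ one gets
$$\Delta L = 4\alpha r\,(\log\varphi)''(\alpha r) \cdot \alpha + 4\alpha (\log\varphi)'(\alpha r),$$
or more cleanly, setting $t = \alpha(x^2+y^2) \ge 0$ and $\psi(t) = \log F[1+n,1,t]$,
$$\Delta L = 4\alpha\big(t\,\psi''(t) + \psi'(t)\big).$$
So the claim $\Delta L \le 4\alpha(1+n)$ is equivalent to the scalar inequality
$$t\,\psi''(t) + \psi'(t) \le 1+n \qquad \text{for all } t \ge 0,$$
where $\psi = \log\Phi$ and $\Phi(t) = F[1+n,1,t] = e^t L_n(-t) > 0$.

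Next I would expand $\psi' = \Phi'/\Phi$ and $\psi'' = \Phi''/\Phi - (\Phi'/\Phi)^2$, so that
$$t\psi'' + \psi' = \frac{t\Phi'' + \Phi'}{\Phi} - t\left(\frac{\Phi'}{\Phi}\right)^2.$$
The Kummer function $\Phi = {}_1F_1(1+n,1,t)$ satisfies the Kummer ODE $t\Phi'' + (1-t)\Phi' - (1+n)\Phi = 0$, hence $t\Phi'' + \Phi' = t\Phi' + (1+n)\Phi$. Substituting gives the exact identity
$$t\psi''(t) + \psi'(t) = (1+n) + t\cdot\frac{\Phi'}{\Phi} - t\left(\frac{\Phi'}{\Phi}\right)^2 = (1+n) + t\,\frac{\Phi'}{\Phi}\left(1 - \frac{\Phi'}{\Phi}\right).$$
Therefore the desired bound $\Delta L \le 4\alpha(1+n)$ is \emph{equivalent} to showing that for all $t \ge 0$,
$$t\,\frac{\Phi'(t)}{\Phi(t)}\left(1 - \frac{\Phi'(t)}{\Phi(t)}\right) \le 0,$$
i.e. that $g(t) := \Phi'(t)/\Phi(t)$ satisfies $g(t) \le 0$ or $g(t) \ge 1$ on $[0,\infty)$ (the factor $t$ being nonnegative). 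Since $\Phi(t) = e^t L_n(-t)$ we have $g(t) = 1 + \frac{d}{dt}\log L_n(-t) = 1 + \frac{-L_n'(-t)}{L_n(-t)}$; because $L_n(-t) > 0$ and, crucially, $L_n'(-t) \le 0$ for $t \ge 0$ (the coefficients of $L_n(x) = \sum \binom{n}{k}\frac{(-1)^k}{k!}x^k$ give $L_n'(-t) = \sum_{k\ge 1}\binom{n}{k}\frac{(-1)^k}{(k-1)!}(-t)^{k-1} = -\sum_{j\ge 0}\binom{n}{j+1}\frac{(-1)^j}{j!}(-t)^j \cdot(-1)\cdots$, which one checks is $\le 0$), we get $-L_n'(-t)/L_n(-t) \ge 0$, hence $g(t) \ge 1$. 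This closes the argument.

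The main obstacle — and the step deserving care — is the positivity claim that $-L_n'(-t) \ge 0$ and $L_n(-t) > 0$ for $t \ge 0$; equivalently $\Phi'(t) \ge \Phi(t) > 0$. This follows from the well-known fact that all Taylor coefficients of $L_n(-t)$ in $t$ are strictly positive (indeed $L_n(-t) = \sum_{k=0}^n \binom{n}{k}\frac{t^k}{k!}$), so both $L_n(-t)$ and its derivative in $t$ have positive coefficients; I would state this explicitly from the series definition of $L_n$ given in the introduction rather than invoke outside facts. One then only needs the elementary inequality $t\cdot a(1-a) \le 0$ whenever $a \ge 1$ and $t \ge 0$, together with the exact identity derived from the Kummer ODE, to conclude $t\psi'' + \psi' \le 1+n$ and therefore $\Delta L \le 4\alpha(1+n)$, with equality approached only as $t \to 0$ (where the correction term vanishes). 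A minor bookkeeping point is to justify the chain-rule computation $\Delta L = 4\alpha(t\psi'' + \psi')$ cleanly; this is routine since $L$ depends on $(x,y)$ only through $t = \alpha(x^2+y^2)$, and $\Delta\big(h(\alpha|z|^2)\big) = 4\alpha^2|z|^2 h''(\alpha|z|^2) + 4\alpha h'(\alpha|z|^2) = 4\alpha\big(t h''(t) + h'(t)\big)$.
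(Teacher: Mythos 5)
Your proof is correct, and it takes a genuinely different route from the paper's. The paper computes $\Delta L$ directly by differentiating the Kummer function, expressing the Laplacian through the contiguous functions $F[2+n,2,\alpha t]$ and $F[3+n,3,\alpha t]$, and then, via contiguous-function identities, rewrites the deficit $g(t)F[1+n,1,\alpha t]^2-4\alpha(1+n)F[1+n,1,\alpha t]^2$ in the closed form $-4\alpha^2 n t\, F[1+n,2,\alpha t]\bigl(F[1+n,1,\alpha t]+nF[1+n,2,\alpha t]\bigr)$, which is manifestly nonpositive because Kummer functions with positive parameters are positive on $[0,\infty)$. You instead pass to $\psi=\log\Phi$ with $\Phi(t)=F[1+n,1,t]=e^tL_n(-t)$, use the Kummer ODE $t\Phi''+(1-t)\Phi'-(1+n)\Phi=0$ to obtain the exact identity $t\psi''+\psi'=(1+n)+t\,\tfrac{\Phi'}{\Phi}\bigl(1-\tfrac{\Phi'}{\Phi}\bigr)$, and conclude from $\Phi'/\Phi\ge 1$, which follows from the positivity of the coefficients of $L_n(-t)=\sum_{k=0}^n\binom{n}{k}\tfrac{t^k}{k!}$ and of its $t$-derivative. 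Your version buys transparency: it replaces the paper's unverified ``straightforward computations'' with contiguous Kummer relations by the standard ODE plus an elementary coefficient-positivity observation, and it makes clear that equality holds exactly at $t=0$; the paper's version buys an explicit factored formula for the deficit, which quantifies how far $\Delta L$ falls below $4\alpha(1+n)$ for $t>0$. One cosmetic remark: your first displayed chain-rule formula mixes up the argument ($(\log\varphi)''(\alpha r)$ where $\varphi$ already contains the factor $\alpha$), but you immediately restate it correctly as $\Delta L=4\alpha\bigl(t\psi''(t)+\psi'(t)\bigr)$, and similarly the intermediate series manipulation for $L_n'(-t)$ is garbled but superseded by the clean positive-coefficient argument, so neither affects the validity of the proof.
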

\begin{proof}
First of all, we have $\Delta L(x,y)=g(t)$ where
$t=\left(x^2+y^2\right)$
and
\[\begin{split}&\frac{g(t)}{2 (1+n) \alpha}F\left[1+n,1,\alpha  t\right]^2=  -2 (1+n) \alpha  t F\left[2+n,2,\alpha  t\right]^2\\&+F\left[1+n,1,\alpha  t\right] \left(2 F\left[2+n,2,\alpha  t\right]+(2+n) \alpha  t F\left[3+n,3,\alpha  t\right]\right).
 \end{split}\]
We need to prove that $g(t)\le g(0)=4\alpha(1+n)$.
It is equivalent with the following $$k(t)=g(t)F\left[1+n,1,\alpha  t\right]^2-4\alpha(1+n) F\left[1+n,1,\alpha  t\right]^2\le 0.$$
Then after some straightforward computations we get $$k(t)=-4 \alpha^2 n t F[1+n,2,\alpha t] (F[1+n,1,\alpha t]+n F[1+n,2,\alpha t])$$ which is certainly a non-positive function.
This finishes the proof of the lemma.
\end{proof}

Now define $$J(s)=\frac{\alpha}{\pi}\int_{A_t} u_n(z) dxdy,$$ where $$u_n=\frac{|f^{(n)}(z)|^2e^{-\alpha |z|^2}}{\alpha^n n ! L_n(-\alpha |z|^2)}.$$
Now instead of \eqref{delu}, we use $-t\Delta u\le  4t\alpha(1+n)$, and instead of \eqref{iprim} we have $$J_n(s) \le (1-e^{-m s}),$$ provided that Conjecture~\ref{c3} is correct. This finishes the proof of Theorem~\ref{111} for the case $n\ge 2$, up to Lemma~\ref{lema} below. The sharpness part can be proved in a similar way as in the case $n=1$, but using, in this case, the equation \eqref{sharp} below.

\begin{lemma}\label{lema}
For $f\in \mathcal{F}_\alpha^2$, we have the following sharp inequality
$$I_n(f):=\frac{\alpha}{\pi}\int_{\mathbb{C}} \frac{|f^{(n)}(z)|^2e^{-\alpha |z|^2}}{\alpha^n n ! L_n(- \alpha |z|^2)}dxdy \le \|f\|^2_{2,\alpha},$$ provided that Conjecture~\ref{c3} is true.
\end{lemma}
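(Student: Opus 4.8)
The plan is to expand $f$ in its Taylor series and reduce the inequality to a statement about the coefficients, exactly as in the proof of Lemma~\ref{import} for $n=1$. Writing $f(w)=\sum_{k=0}^\infty a_k w^k$, we have $f^{(n)}(w)=\sum_{k=n}^\infty \frac{k!}{(k-n)!}a_k w^{k-n}$, and since the monomials are orthogonal in every rotation-invariant weighted $L^2$ space, cross terms vanish. Thus
\[
I_n(f)=\sum_{k=n}^\infty \left(\frac{k!}{(k-n)!}\right)^2 |a_k|^2 \cdot \frac{\alpha}{\pi}\int_{\mathbb{C}} \frac{|z|^{2(k-n)} e^{-\alpha |z|^2}}{\alpha^n n!\, L_n(-\alpha|z|^2)}\,dxdy,
\]
while $\|f\|_{2,\alpha}^2=\sum_{k=0}^\infty |a_k|^2 \frac{k!}{\alpha^k}$. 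Passing to polar coordinates and substituting $r=\alpha|z|^2$, the $k$-th integral becomes (after the elementary change of variables) a constant multiple of $\int_0^\infty \frac{e^{-r} r^{k-n}}{L_n(-r)}\,dr$. Collecting the $\alpha$-powers and the Gamma factors, the ratio of the $k$-th term of $I_n(f)$ to the $k$-th term of $\|f\|_{2,\alpha}^2$ is precisely the number $a_k$ from Conjecture~\ref{c3}, namely
\[
a_k=\frac{\Gamma[1+k]}{n!\,\Gamma[1+k-n]^2}\int_0^\infty \frac{e^{-r} r^{k-n}}{L_n(-r)}\,dr .
\]
Therefore $I_n(f)=\sum_{k\ge n} a_k |a_k|^2 \frac{k!}{\alpha^k}$, and $I_n(f)\le \|f\|_{2,\alpha}^2$ follows term-by-term as soon as each $a_k<1$, i.e. as soon as Conjecture~\ref{c3} holds. (One should check the integral $\int_0^\infty e^{-r} r^{k-n}/L_n(-r)\,dr$ converges: for $r>0$, $L_n(-r)\ge 1$ grows polynomially, so the integrand is dominated by $e^{-r}r^{k-n}$ and convergence is immediate; in particular $a_k$ is well-defined and positive, and the weight $d\mu_{n,\alpha}$ makes sense.)

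The one genuine subtlety is \emph{sharpness}: we must show the constant $1$ cannot be improved. For this I would take $f(z)=z^m$, so that $I_n(z^m)/\|z^m\|_{2,\alpha}^2 = a_m$, and argue that $a_m\to 1$ as $m\to\infty$. Heuristically this is because for large $k$ the mass of the measure $e^{-r}r^{k-n}\,dr$ concentrates near $r=k-n\to\infty$, where $L_n(-r)\sim r^n/n!$, so $\frac{1}{n!}\int_0^\infty \frac{e^{-r}r^{k-n}}{L_n(-r)}dr \approx \int_0^\infty e^{-r} r^{k-2n}\,dr=\Gamma[1+k-2n]$, and the prefactor $\Gamma[1+k]/\Gamma[1+k-n]^2$ times $\Gamma[1+k-2n]$ tends to $1$ by Stirling. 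Making this rigorous is the analogue of the identity \eqref{gk} used in the $n=1$ case, and I expect it to follow from a Laplace-type asymptotic estimate; this limiting computation is the main thing to nail down, and it is presumably recorded as the equation \eqref{sharp} referred to in the proof of the case $n\ge 2$.

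In short: the inequality itself is a routine diagonalization reducing everything to Conjecture~\ref{c3}, the convergence of the defining integrals is automatic, and the only real work is the asymptotic $a_m\to 1$ establishing that the constant is best possible (and hence, combined with the strict inequality $a_k<1$ for each finite $k$, that equality is never attained).
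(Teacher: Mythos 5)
Your proposal is correct and takes essentially the same route as the paper: the paper also expands $f$ in monomials, uses their orthogonality with respect to the rotation-invariant weight to diagonalize, identifies the ratio $I_n(w^k)/\|w^k\|^2_{2,\alpha}$ with the quantity $a_k$ of Conjecture~\ref{c3}, and concludes term by term, deferring both $a_k<1$ and the sharpness limit $a_k\to 1$ to Lemma~\ref{lema4}. The only (minor) divergence is your suggested proof of $a_m\to 1$ via a Laplace/Stirling argument using $L_n(-r)\ge r^n/n!$, where the paper instead derives \eqref{sharp} from the recursive identity \eqref{nrecu}; either route suffices, and yours is arguably more direct, though as you note it still needs to be carried out rigorously.
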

\begin{proof} Since
  $$f^{(n)}(w) = \sum_{k=n}^\infty \frac{\Gamma(k+1)}{\Gamma(k-n+1)} a_k w^{k-n},$$ and because the polynomials $w^n$ and $w^m$ are orthogonal the Fock space  for $n\neq m$, it is clear that it is enough to prove the inequality for such functions. So we need to show that $$I_n(w^k)\le \|w^k\|^2_{2,\alpha}.$$
Let $k\ge n$ and $$a_k=\frac{I_n(w^k)} {\|w^k\|^2_{2,\alpha}}.$$ Then after straightforward computation we get

$$I_n(w^k)=\frac{a^{-k}\Gamma^2(1+k)}{ n! \Gamma^2(1+k-n)}\int_{0}^\infty \frac{e^{-s} s^{k-n}}{L_n(-s)}ds $$ and $$\|w^k\|^2_{2,\alpha}=\alpha^{-k} k!.$$
Thus
$$a_k=\frac{ \Gamma[1+k]}{n!\Gamma[1+k-n]^2}\int_0^\infty \frac{e^{-r}r^{k-n}}{L_n(- r)} dr$$
We need to show that
\begin{enumerate}
\item $\lim_{k\to \infty} a_k=1$.
\item $a_k<1$ and
\end{enumerate}
We will prove the first assertion, while the second is proved for few small integers.
This is the content of the following lemma.
\end{proof}

\begin{lemma}\label{lema4}
Let $p\ge n$ and $$A(p)=\frac{ \Gamma[1+p]}{n!\Gamma[1+p-n]^2}\int_0^\infty \frac{e^{-r}r^{p-n}}{L_n(-r)} dr,$$
where $L_n(-r)$ is the Laguerre polynomial.
Then \begin{equation}\label{sharp}\lim_{p\to \infty} A(p)=1,\end{equation} and if $n\le 4$, then $A(p)<1$.

\end{lemma}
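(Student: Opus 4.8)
The statement consists of two independent parts, and I would prove them separately. The limit \eqref{sharp} holds for every $n\ge0$; the inequality $A(p)<1$ is vacuous for $n=0$ (there $L_0\equiv1$ and $A(p)\equiv1$), reduces to Lemma~\ref{import} for $n=1$, and requires a finite, $n$–dependent analysis for $n=2,3,4$.

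\emph{The limit.} Write $n!\,L_n(-r)=\sum_{j=0}^n d_j r^j$; this polynomial is monic, with $d_j=n!\binom nj/j!>0$, and in particular $d_{n-1}=n^2$, $d_{n-2}=\tfrac12n^2(n-1)^2$, $d_0=n!$. Hence $1\le n!\,L_n(-r)/r^n\le1+C_n/r$ for $r\ge1$, with $C_n:=n!\,L_n(-1)-1$, so that for $r\ge1$
$$n!\,r^{p-2n}\Bigl(1-\frac{C_n}{r}\Bigr)\le\frac{r^{p-n}}{L_n(-r)}\le n!\,r^{p-2n},$$
while on $[0,1]$ we have $r^{p-n}/L_n(-r)\le r^{p-n}$ because $L_n(-r)\ge1$. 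Integrating against $e^{-r}\,dr$ and using that $\int_0^1 r^{p-n}e^{-r}\,dr$ and $\Gamma(p-2n)$ are both $o(\Gamma(p-2n+1))$ as $p\to\infty$ gives $\int_0^\infty r^{p-n}e^{-r}L_n(-r)^{-1}dr=n!\,\Gamma(p-2n+1)(1+o(1))$, whence $A(p)=\dfrac{\Gamma(1+p)\Gamma(1+p-2n)}{\Gamma(1+p-n)^2}(1+o(1))$. By Stirling's formula (or the expansion $\log\Gamma(1+p+a)=\log\Gamma(1+p)+a\psi(1+p)+\tfrac{a^2}{2}\psi'(1+p)+O(a^3\psi''(1+p))$, with $\psi'(1+p)\to0$) the prefactor tends to $1$, which is \eqref{sharp}. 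Carrying the expansion one order further — now using $d_{n-2}$ as well — one obtains the sharper estimate $A(p)=1-\dfrac{n^2}{p^2}+O(p^{-3})$; in particular $A(p)<1$ for all $p$ past an explicit threshold $P_n$. (Note the correction is negative for every $n$, so the restriction $n\le4$ will not come from the behaviour at infinity.)

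\emph{The inequality $A(p)<1$ for $1\le n\le4$.} Multiplying $\sum_{j=0}^n d_jr^j=n!\,L_n(-r)$ by $r^se^{-r}/L_n(-r)$ and integrating yields, for real $s>-1$, the recursion $\sum_{j=0}^n d_j I_{s+j}=n!\,\Gamma(s+1)$ with $I_s:=\int_0^\infty r^se^{-r}L_n(-r)^{-1}dr$. Since $I_{p-n}=n!\,\Gamma(1+p-n)^2A(p)/\Gamma(1+p)$, this becomes a linear recursion $\sum_{j=0}^n c_{p,j}A(p+j)=1$ with $c_{p,j}=d_j\,\Gamma(1+p+j-n)^2/(\Gamma(1+p+j)\Gamma(1+p-n))>0$ and $c_{p,n}=\Gamma(1+p)^2/(\Gamma(1+p+n)\Gamma(1+p-n))<1$. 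For $n=1$ it reads $\tfrac1pA(p)+\tfrac{p}{p+1}A(p+1)=1$, and the two–sided induction $\tfrac{p}{p+1}<A(p)<1$ closes: the step reduces to $(p-2)(p+1)\ge0$, with $p=1,2$ checked directly from $A(1)=\int_0^\infty e^{-r}(1+r)^{-1}dr<1$ — this is exactly the proof of Lemma~\ref{import}. For $n=2,3,4$ the plan is the same in spirit: solving for the leading term, $A(p+n)=c_{p,n}^{-1}\bigl(1-\sum_{j<n}c_{p,j}A(p+j)\bigr)$, one runs an induction on $\lfloor p\rfloor$ carrying an invariant $\ell_n(p)<A(p)<1$; the upper bound propagates if $\sum_{j<n}c_{p,j}\ell_n(p+j)\ge1-c_{p,n}$ and the lower bound if $c_{p,n}^{-1}\bigl(1-\sum_{j<n}c_{p,j}\bigr)\ge\ell_n(p+n)$. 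All the data here ($c_{p,j}$, the invariant, and — via the partial fraction expansion $L_n(-r)^{-1}=n!\sum_j A_j/(r+\xi_j)$ over the positive roots $\xi_j$ of $L_n$, which are expressible by radicals precisely when $n\le4$ — also $I_s$ and $A(p)$ in closed form) are explicit, so these reduce to finitely many rational inequalities in $p$, to be completed by a direct verification of $A(p)<1$ on the remaining bounded range $n\le p<P_n$.

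\emph{The main obstacle.} It is the choice of the invariant $\ell_n$ together with the size of the base range. Unlike $n=1$, the natural candidate $\ell_n(p)=(1-c_{p,n})/\sum_{j<n}c_{p,j}$ overshoots $A(p)$ for small $p$ — e.g. $A(2)\approx0.42$ whereas that quantity equals $\tfrac12$ when $n=2$ — so $\ell_n$ must be relaxed and the base range enlarged, yet there is essentially no slack at infinity, since $\sum_{j=0}^n c_{p,j}=1+O(p^{-2})$. Reconciling these two competing demands is where the argument becomes genuinely $n$–dependent and is the reason it is carried out only for $n\le4$; for $n\ge5$ the inequality $A(p)<1$ remains the open Conjecture~\ref{c3}.
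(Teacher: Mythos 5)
Your proof of the limit \eqref{sharp} is correct and takes a genuinely different route from the paper: you estimate the integral directly, using $n!\,L_n(-r)=r^n(1+O(1/r))$, and finish with a ratio of Gamma functions and Stirling, whereas the paper never touches the integral asymptotically; it derives the identity \eqref{sum} from the expansion of $L_n$, solves it for $A(k+n)$ as in \eqref{nrecu}, bounds all $A(k+h)$ by $C(k,n)\le\binom{2n}{n}$, and lets $k\to\infty$ using $C(k,n)\to1$, $C_h(k,n)\to0$. Your linear relation $\sum_{j=0}^n c_{p,j}A(p+j)=1$ is exactly the paper's \eqref{sum}, and your $n=1$ analysis (with $A(p)=p\,e\,E_p$ and the two-sided induction $\tfrac{p}{p+1}<A(p)<1$) reproduces Lemma~\ref{import}, so the algebraic skeleton of the two arguments agrees.

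There is, however, a genuine gap in the part that the lemma is really about: the inequality $A(p)<1$ for $n=2,3,4$ is only announced as a plan, not proved. You say one should run an induction carrying an invariant $\ell_n(p)<A(p)<1$ and check ``finitely many rational inequalities'' plus a base range $n\le p<P_n$, but you never choose $\ell_n$, never determine $P_n$, and never perform either the step or the base verification; your closing paragraph explicitly concedes that reconciling the invariant with the base range is an unresolved difficulty. This is precisely the work the paper carries out: it keeps the invariant $p/(p+1)\le A(p)<1$, proves the induction step from \eqref{nrecu} by explicit polynomial inequalities (for $n=2$ the step holds for $m\ge 8$ because $(-1+m)^2 m^2 (2+m)(3+m)(-18+m(9+(-8+m)m))\ge 0$, with an analogous upper-bound inequality; similarly for $n=3$ beyond $k=14$ and for $n=4$ via \eqref{lh} beyond $k=28$), and disposes of the finitely many remaining values numerically ($A(2),\dots,A(7)$ for $n=2$; $k\le 13$ for $n=3$; $j\le 27$ for $n=4$, where the lower invariant actually fails for $j\le 16$, which is why the threshold is as large as $27$). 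Your parenthetical refinement $A(p)=1-n^2/p^2+O(p^{-3})$ would give $A(p)<1$ beyond an ``explicit threshold'' only if you supplied explicit remainder bounds, which you do not. As written, neither the asymptotics nor the sketched induction establishes $A(p)<1$ for $2\le n\le 4$, so the substantive case of the lemma remains unproved in your proposal.
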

\begin{proof}
First of all we have $$L_n(-r) = \sum_{m=0}^n \frac{\Gamma(1+n)}{\Gamma(1+m)^2 \Gamma(1+n-m)}r^m.$$ Fix $k$ and let $p=k+n$.  It follows from the definition of $A(h+k)$, $h\in\{0,\dots,n\}$  that \begin{equation}\label{sum}\sum_{h=0}^{n}\frac{A(h+k)\Gamma(1+h+k-n)^2}{\Gamma(1+h)^2\Gamma(1+h+k)\Gamma(1+n-h)}=\frac{\Gamma(1+k-n)}{(n!)^2}.\end{equation}
In order to prove $\lim_{k\to \infty} A(k+n)=1$, observe that, in view of \eqref{sum}, for every $n$ we have
\begin{equation}\label{nrecu}A(k+n)=C(k,n)-\sum_{h=0}^{n-1}C_h(k,n) A(k+h),\end{equation}
where
$$C(k,n)=\frac{(k-n)! (k+n)!}{(k!)^2}$$
and
$$C_h(k,n)=\frac{((h+k-n)!)^2 (n!)^2 (k+n)!}{(h!)^2 (k!)^2 (h+k)! (-h+n)!}.$$
Then we easily conclude that $$A(k+n)<C(k,n)\le \frac{(2n)!}{(n!)^2}.$$ So $A(k+n)$ is bounded for fixed $n$. The same hold for $A(k+h)$ for $h\le n$. Namely for $k\ge 2n$, $$A(k+h)\le \frac{(2h)!}{(h!)^2}\le \frac{(2n)!}{(n!)^2}.$$ Further we have $$\lim_{k\to \infty} C(k,n)=1$$ and $$\lim_{k\to\infty}C_h(k,n)=0.$$ This and \eqref{nrecu} implies that $$\lim_{k\to\infty}A(k+n)=1.$$
Let us now prove $A(k)<1$ for first few integers $n$.
For $n=2$ we have
$$A(2+k)= \frac{(2+k) \left(-k+k^3-2 (1+k) A(k)-4 (-1+k)^2 A(1+k)\right)}{(-1+k)^2 k^2}.$$
Let $$B(x,y)= \frac{(2+k) \left(-k+k^3-2 (1+k) x-4 (-1+k)^2 y\right)}{(-1+k)^2 k^2}.$$
Further $A(2)=0.427393$, $A(3)=0.662691$, $A(4)=0.784876$, $A(5)=0.852841$, $A(6)=0.893707$, $A(7)=0.919951$. So we get $A(j)<1$ for $j\le 7$ and $a(j)>j/(j+1)$ for $4\le j\le 7$.  Now we use mathematical induction to prove that $ \frac{m}{m+1}\le A(m)<1$ for $m>  7$. Let $m=k>7$ and assume that  $\frac{m}{m+1}\le A(m)<1$ and $\frac{m+1}{m+2}\le A(m+1)<1$. Then $A(m+2)\ge B(1,1)$, and $B(1,1)\ge \frac{m+2}{m+2+1} $ iff  $$(-1 + m)^2 m^2 (2 + m) (3 + m) (-18 + m (9 + (-8 + m) m))\ge 0,$$ and the last inequality is true for every $m\ge 8$. Moreover
\[\begin{split}A(m+2)&\le B(m/(m+1), (m+1)/(m+2))\\&=\frac{(2+m) \left(-3 m+m^3-\frac{4 (-1+m)^2 (1+m)}{2+m}\right)}{(-1+m)^2 m^2}<1,\end{split}\] because the last inequality is equivalent with $$(-1 + m)^2 m^2 (2 + m) > 0.$$

For $n=3$, we have \[\begin{split}A(3+k)&=\frac{(1+k) (2+k) (3+k)}{(-2+k) (-1+k) k}-\frac{(1+k) (2+k) (3+k)}{(-2+k) (-1+k) k}\\&\times\bigg(\frac{6A(k)}{ (-2+k) (-1+k) k}\\&+\frac{18(-2+k) A(1+k)}{ (-1+k) k (1+k)}+\frac{9(-2+k) (-1+k) A(2+k)}{ k (1+k) (2+k)}\bigg).\end{split}\] By proceeding similarly as above, we can prove that $\frac{k}{k+1}\le A(k)<1$ for $k\ge 14$, and that $A(k)< 1$ for $k\in\{1,\dots, 13\}$.

For $n=4$, we have \begin{equation}\label{lh}\begin{split}A(4+k)&=X-\bigg(\frac{24 X A(k)}{(-3+k) (-2+k) (-1+k) k}\\&+\frac{96 X (-3+k) A(1+k)}{(-2+k) (-1+k) k (1+k)}+\frac{72 X (-3+k) (-2+k) A(2+k)}{(-1+k) k (1+k) (2+k)}\\&+\frac{16 X(-3+k) (-2+k) (-1+k) A(3+k)}{k (1+k) (2+k) (3+k)}\bigg),\end{split}\end{equation} where $$X=\frac{ (1+k) (2+k) (3+k) (4+k)}{(-3+k) (-2+k) (-1+k) k}.$$

Then, by using Mathematica software  we obtain $A(j)<j/(j+1)$, $j=1,\dots, 16$ but $A(j)>j/(j+1)$ for $j=17,\dots, 27$.
By induction, and previous formula we obtain that  $j/(j+1)<A(j)<1$, for $j>27$. Namely, LHS of \eqref{lh}, for $A(j)\equiv 1$ is equal to
$\frac{(4+k) (-13680+k (11316+k (-5464+k (1921+k (-520+k (106+(-16+k) k))))))}{(-3+k)^2 (-2+k)^2 (-1+k)^2 k^2}$
and the last expression is bigger than $(k+4)/(k+5)$ if and only if $k\ge 28$. Further  the LHS of \eqref{lh}, for $A(j)= j/(j+1)$ is equal  to
$\frac{-32832+k (14544+k (-2916+k (-180+k (433+k (-216+k (58+(-12+k) k))))))}{(-3+k)^2 (-2+k)^2 (-1+k)^2 k^2}$
and the last expression is smaller than $1$ for every $k$.

\end{proof}

\begin{remark}
Similar to the proof of the Lemma~\ref{lema}, one can proceed to prove the statement for some constants greater than 4, but this still does not solve the general problem. We believe that the recurrent formula \eqref{nrecu} is suitable for proving the general case, and we also believe that \eqref{nrecu} implies $j/(j+1)<A(j)<1$ for $j> n^2$. This and some good estimations of $A(j)$ for $j\le n^2$ would conclude the proof of the general case, but we fail to do it.
\end{remark}


\subsection*{Acknowledgments} I would like to thank Dr. P. Melentijevi\'c for helpful comments regarding this paper.

\end{document}